\newtheorem{theorem}{Theorem}
\newtheorem{prop}[theorem]{Proposition}
\newtheorem{definition}{Definition}
\newtheorem{lemma}[theorem]{Lemma}
\newtheorem{cor}[theorem]{Corollary}
\newtheorem{remark}{Remark}
\newtheorem{example}{Example}
\newtheorem{conj}{Conjecture}
\newenvironment{proof}{\noindent {\em Proof.}}{\hspace*{\fill} $\Box $\newline}
\newcommand{\F}{\mathbb{F}}
\newcommand{\Z}{\mathbb{Z}}
\newcommand{\GL}{\mbox{\rm GL}}
\newcommand{\Aut}{\mbox{\rm Aut}}
\newcommand{\wt}{\mbox{\rm wt}}
\title{ An Algorithm for Classification of Binary Self-Dual Codes}
\author{Stefka Bouyuklieva,\\
Faculty of Mathematics and Informatics,\\ Veliko Tarnovo University, Bulgaria,\\
Iliya Bouyukliev,\\ Institute of Mathematics and Informatics,\\
Bulgarian Academy of Sciences, Veliko Tarnovo, Bulgaria}
\date{}
\begin{document}
\maketitle

\textsl{We dedicate this research to our teacher Stefan Dodunekov
on the occasion of his 65th birthday.}

\begin{abstract}
An efficient algorithm for classification of binary self-dual
codes is presented. As an application, a complete classification
of the self-dual codes of length 38 is given. 
\end{abstract}
{\bf Index Terms:} {\it Self-dual codes, Classification,
Isomorph-free generation}

\section{Introduction}

The self-dual codes form one of the important classes of linear
codes because of their rich algebraic structure and their close
connections with other combinatorial configurations like block
designs, lattices, graphs, etc.

The classification of self-dual codes began in the seventies in
the work of Vera Pless \cite{PlessSO}, where she classified the
binary self-dual codes of length $n\leq 20$. The method used in
the beginning remained essentially the same throughout the
succeeding classifications. This is a recursive classification
which proceeds from smaller to larger length and codes are
classified up to equivalence. The process begins with the formula
for the number of all self-dual codes of length $n$ called a mass
formula. The number of the self-dual binary codes of even length
$n$ is $N(n)=\prod_{i=1}^{n/2-1} (2^i + 1)$.

Throughout this paper all codes are assumed to be binary. Two
binary codes are called \emph{equivalent} if one can be obtained
from the other by a permutation of coordinates. The permutation
$\sigma\in S_n$ is an \emph{automorphism} of the code $C$, if
$C=\sigma(C)$ and the set of all automorphisms of $C$ forms a
group called the \emph{automorphism group} of $C$, which is
denoted by $\Aut(C)$ in this paper. If $C$ has length $n$, then
the number of codes equivalent to $C$ is $n!/ |\Aut(C)|$. To
classify self-dual codes of length $n$, it is necessary to find
inequivalent self-dual codes $C_1,\dots, C_r$ so that the
following mass formula holds:
\begin{equation}\label{mass}
\sum_{i=1}^r\frac{n!}{|\Aut(C_i)|} = N(n).
\end{equation}

In the survey \cite{Huffman05} Huffman summarized the
classification of all binary self-dual codes of length $n\le 36$.
As the complete classifications for lengths 34 and 36 was not done
at that time, we present here a new version
of Table 1 from \cite{Huffman05}, 
but instead of the question marks 
we put the
correct numbers. Moreover, we extend the table with two more
lengths - 38 and 40. The number of all inequivalent singly-even
(Type I) and doubly-even (Type II) codes is denoted by $\sharp_I$
and $\sharp_{II}$, respectively. In the table $d_{max,I}$
($d_{max,II}$) is the biggest possible minimum distance for which
Type I (Type II) codes with the given length exist, and
$\sharp_{max,I}$ ($\sharp_{max,II}$) is their number. 

The classification of the self-dual codes of length 34 was given
in \cite{SD34}. Using the self-dual $[34,17,6]$ codes, Melchor and
Gaborit classified the optimal self-dual codes of length 36,
namely the $[36,18,8]$ codes \cite{n=36}. Recently, Harada and
Munemasa in \cite{Harada36} completed the classification of the
self-dual codes of length 36 and created a database of self-dual
codes \cite{database}. The doubly-even self-dual codes of length
40 were also classified by Betsumiya, Harada and Munemasa
\cite{BHM40}. Moreover, using these codes, they classified the
optimal self-dual $[38,19,8]$ codes. A classification of extremal
self-dual codes of length 38 was very recently obtained also in
\cite{AGKSS38} by somewhat different techniques.

Actually, the construction itself is easy, the equivalence test is
the difficult part of the classification. Because of that, for
larger lengths the recursive constructions are used preferably
heuristic, for building examples for codes with some properties.
There are also many partial classifications, namely
classifications of self-dual codes with special properties, for
example self-dual codes invariant under a given permutation, or
self-dual codes connected with combinatorial designs with given
parameters. These classifications are not recursive but they use
codes with smaller lengths, that is why the full classification is
very important in these cases, too.

There exist several methods to construct self-dual codes of length
$n+2$ from self-dual codes of length $n$. In \cite{AGKSS38}, the
authors describe three such methods recalling them \emph{the
recursive construction, the building-up construction and the
Harada-Munemasa construction}. The first one (recursive
construction) gives all inequivalent $[n+2,n/2+1,d+2]$ codes
starting from the self-dual $[n,n/2,d]$ codes \cite{n=36}. The
other two constructions give all self-dual codes of length $n+2$
from the self-dual codes of length $n$. Our construction is
similar to the Harada-Munemasa construction but we offer a better
way to deal with equivalence classes.

In this paper, we present an algorithm for generating binary
self-dual codes which gives as output exactly one representative
of every equivalence class. To develop this algorithm, we use an
approach introduced by Brendan McKay known as \emph{isomorph-free
exhaustive generation} \cite{McK}. The constructive part of the
algorithm is not different from the other recursive constructions
for self-dual codes, but to take only one representative of any
equivalence class, we use a completely different manner. This
approach changes extremely the speed of generation of the
inequivalent codes of a given length. Its special feature is that
practically there is not equivalence test for the objects.

As a result, the classification of all binary self-dual codes of
length 38 is presented. The number of these codes is given in the
following theorem and also listed in Table \ref{Table:F2}.

\begin{theorem}
There are 38 682 183 inequivalent self-dual codes of length $38$.
\end{theorem}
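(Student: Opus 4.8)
\noindent The plan is to establish the count by an \emph{exhaustive, isomorph-free generation} of all binary self-dual codes of length $38$, using the mass formula \eqref{mass} as an independent certificate that the generated list is complete and has no repetitions. The recursive engine is the classical extension step for self-dual codes: shortening a self-dual code of length $n+2$ on an appropriate pair of coordinates produces a self-dual code of length $n$, and conversely every self-dual $[n+2,(n+2)/2]$ code is recovered from a self-dual $[n,n/2]$ code by re-adjoining two coordinates together with a glue vector $x\in\F_2^n$ (the ``building-up'' construction; the degenerate case in which $C$ has a codeword of weight $2$ forces $C$ to split off a copy of the length-$2$ code $\{00,11\}$ and is handled by $x=0$). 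Running this for $n=2,4,\dots,36,38$ reduces the problem to: from the representatives already obtained at length $36$, form all length-$38$ children and discard equivalent copies.

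First I would fix a computable canonical form for binary codes --- for instance by reducing code equivalence to canonical labelling of an associated vertex-coloured graph, in the style of McKay \cite{McK} --- which as a by-product returns $\Aut(C)$ for every code $C$. Then I would organize the search by canonical augmentation: for a length-$38$ code $C$, let its \emph{canonical parent} be the self-dual $[36,18]$ code obtained by undoing the extension at the coordinate pair selected by the canonical form of $C$, and call a child construction canonical when the glue vector used lies in the distinguished orbit. The generation loop is then entirely local: for each length-$36$ representative $D$, compute $\Aut(D)$, run over the $\Aut(D)$-orbits of glue vectors, build each child $C$, and retain $C$ exactly when its canonical parent is equivalent to $D$ and the augmentation employed is the canonical one. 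McKay's theorem then guarantees that each equivalence class of self-dual $[38,19]$ codes is output precisely once, with its automorphism group already known.

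Finally I would verify the output. Summing $38!/|\Aut(C_i)|$ over the retained codes must equal $N(38)=\prod_{i=1}^{18}(2^i+1)$; moreover, since $38\equiv 6\pmod 8$ there are no doubly-even self-dual codes of this length, so every code in the list is of Type I, and the intermediate outputs at lengths $34$ and $36$ must reproduce the published classifications. The number returned by this computation is $38\,682\,183$.

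I expect the decisive obstacle to be computational scale rather than mathematical subtlety. With tens of millions of inequivalent codes at length $38$ --- and far more codes built before rejection --- the implementation must compute canonical forms and automorphism groups very fast and, crucially, must never store the whole list: this is exactly what the canonical-augmentation test permits, since acceptance of $C$ depends only on $C$ and its parent and never on a comparison with a previously found code, which is what changes the speed so drastically and also lets the computation be distributed over the length-$36$ parents. On the theoretical side, the facts that must be pinned down carefully are the exhaustiveness of the extension step (every self-dual code of length $n+2$ shortens to a self-dual code of length $n$) and the precise correspondence between $\Aut(D)$-orbits of glue vectors and isomorphism types of children; once these are in place, the final integer is a large but finite and verifiable machine computation whose correctness is attested by the mass formula.
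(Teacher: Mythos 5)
Your proposal follows essentially the same route as the paper: a recursive extension from shorter self-dual codes, isomorph-free generation in McKay's canonical-augmentation style with a parent test based on the canonical form, children enumerated via $\Aut$-orbits of glue vectors (the paper's Theorem \ref{thm:punctured2aut}), and verification by the mass formula (the paper additionally uses the weighted mass formula of Theorem \ref{lemma:Thompson} for each minimum weight). The only cosmetic difference is that the paper starts the recursion at length $32$ rather than $36$ and uses its own canonicalization algorithm instead of a graph-based one, neither of which changes the argument.
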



In Section \ref{Sect:Method} we present the theoretical
foundations of the used construction method. 
The aim is to
obtain all inequivalent self-dual codes of dimension $k$ (length
$2k$) on the base of the inequivalent self-dual codes of dimension
$k-1$ (length $2k-2$).

In Section \ref{Sect:Algorithm} we describe the used algorithm.
Codes which are equivalent belong to the same equivalence class.
Every code can serve as a representative for its equivalence
class. We use the concept for a canonical representative, selected
on the base of some specific conditions. This canonical
representative is intended to make easily a distinction between
the equivalence classes.

In Section \ref{Sect:Results} we give the results with some more
information about the obtained codes.

\begin{table}[htb]\rm
\begin{center}
\caption{Binary self-dual codes of length $n\leq 40$
\cite{Huffman05}} \vspace*{0.2in} \label{Table:F2}
\begin{tabular}{c|c|c|c|c|c|c|l}
\noalign{\hrule height1pt}
~~$n$~~&~~$\sharp_I$~~&~~$\sharp_{II}$~~&$d_{max,I}$&$\sharp_{max,I}$&
$d_{max,II}$&$\sharp_{max,II}$&Reference\\
\hline
2 &  1 &  &$2$& 1&     & &\cite{PlessSO} \\
4 &  1 &  &$2$& 1&     & &\cite{PlessSO} \\
6 &  1 &  &$2$& 1&     & &\cite{PlessSO} \\
8 &  1 &1 &$2$& 1&$4$&1&\cite{PlessSO} \\
10&  2 &  &$2$& 2&     & &\cite{PlessSO} \\
12&  3 &  &$4$& 1&     & &\cite{PlessSO} \\
14&  4 &  &$4$& 1&     & &\cite{PlessSO} \\
16& 5  &2 &$4$& 1&$4$&2&\cite{PlessSO} \\
18& 9  &  &$4$& 2&     & &\cite{PlessSO} \\
20& 16 &  &$4$& 7&     & &\cite{PlessSO} \\
22&25  &  &$6$& 1&     & &\cite{PlessSloane} \\
24&  46&9 &$6$& 1&$8$&1&\cite{PlessSloane} \\
26& 103&  &$6$& 1&     & &\cite{CP1,CPS:revised,PlessChildren} \\
28& 261&  &$6$& 3&     & &\cite{CP1,CPS:revised,PlessChildren} \\
30& 731&  &$6$&13&     & &\cite{CP1,CPS:revised,PlessChildren} \\
32&3210&85&$8$& 3&$8$&5&\cite{BilRees,CP1,C-S} \\
34& 24147  &  &$6$&938&    & &\cite{SD34} \\
36& 519492  &  &$8$&41&     & &\cite{Harada36,n=36} \\
38& 38682183  &  &$8$&2744&     & &this paper, \cite{AGKSS38,BHM40}\\
40& ?  & 94343  &$8$&?&   $8$  &16470 &\cite{BHM40} \\
  \noalign{\hrule height1pt}
\end{tabular}
\end{center}
\end{table}

\section{The Construction Method}
\label{Sect:Method}

In this section we describe the theoretical foundations of the
used construction method.
The aim is to obtain all inequivalent self-dual codes of length
$n=2k$ and dimension $k$ on the base of the inequivalent self-dual
codes of dimension $k-1$. Proposition \ref{thm:shortened} gives us
the possibility to develop a classification algorithm. Actually,
this proposition presents a well known property of the self-dual
codes but usually it is stated in the case $d\ge 4$ (see for
example \cite{Harada36}). Here we give a variant for all $d$ but
for that we need the following lemma. We say that the coordinates
$i_1,\dots,i_s$, $1\le i_1<\cdots<i_s\le n$, of the code $C$ are
equal if $x_{i_1}=x_{i_2}=\cdots=x_{i_s}$ for each codeword
$x=(x_1,x_2,\dots,x_n)\in C$.

\begin{lemma}\label{Lemma:prop}
If $C$ is a binary self-dual code then $C$ does not have more than
two equal coordinates. In other words, if
$x_{i_1}=x_{i_2}=\cdots=x_{i_s}$ for each codeword
$x=(x_1,x_2,\dots,x_n)\in C$ then $s\le 2$.
\end{lemma}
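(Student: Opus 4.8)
The plan is a short proof by contradiction. Suppose, on the contrary, that $C$ has three ``equal'' coordinates: there are distinct positions $i_1,i_2,i_3$ such that $x_{i_1}=x_{i_2}=x_{i_3}$ for every codeword $x\in C$. I want to show that this is incompatible with the self-duality $C=C^\perp$.

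The key observation is that equality of a pair of coordinates manufactures a weight-$2$ vector in the dual code. Concretely, let $v=e_{i_1}+e_{i_2}\in\F_2^n$ be the indicator vector of the set $\{i_1,i_2\}$. For every $x\in C$ we have $\langle v,x\rangle = x_{i_1}+x_{i_2}=0$, so $v\in C^\perp$. Since $C$ is self-dual, $C^\perp=C$, and therefore $v\in C$.

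Now I read off the contradiction from the coordinates of $v$ itself. Being a codeword, $v$ must satisfy the standing hypothesis $v_{i_1}=v_{i_3}$; but by construction $v_{i_1}=1$ while $v_{i_3}=0$. This contradiction shows that no three coordinates of a binary self-dual code can be equal, i.e. $s\le 2$. The situations in which there are fewer than three candidate positions (in particular $n\le 2$) are vacuous, so nothing further is required.

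I do not expect any real obstacle: the entire content is the remark that a pair of constantly-equal coordinates produces a weight-$2$ vector in $C^\perp$, which self-duality then forces back into $C$; everything else is immediate from inspecting that vector's entries.
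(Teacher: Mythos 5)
Your proof is correct and follows essentially the same route as the paper: both arguments observe that two constantly-equal coordinates force the weight-$2$ vector supported on them into $C^\perp=C$, and then read off the contradiction at the third coordinate. The paper merely adds an explicit (and not really needed) dispatch of the case $n=2$, which you correctly note is vacuous.
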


\begin{proof}
If $n=2$ then $C=i_2=\{00,11\}$ and $C$ has only two coordinates.
Let $n\ge 4$. Since the dimension of $C$ is $n/2\ge 2$, not all
coordinates are equal and $s<n$. Suppose that $s\ge 3$, so
$x_{i_1}=x_{i_2}=x_{i_3}$ for all codewords $x\in C$. In such a
case the vector $y=(y_1,\dots,y_n)\in\F_2^n$ of weight 2 with
$y_{i_1}=y_{i_2}=1$ will belong to $C^\perp=C$. In this way we
obtain a codeword for which $y_{i_1}=y_{i_2}=1$ but $y_{i_3}=0$ --
a contradiction. Hence $s\le 2$.
\end{proof}

Any self-dual $[n,n/2,2]$ code for $n>2$ is decomposable as
$i_2\oplus C_{n-2}$ where $C_{n-2}$ is a self-dual code of length
$n-2$. Furthermore, the number of the self-dual $[n,n/2,2]$ codes
is equal to the number of all self-dual $[n-2,n/2-1]$ codes
($n>2$). But even if a self-dual code of length $n>2$ has minimum
distance $d=2$, it has two coordinates which are not equal.

\begin{prop}\label{thm:shortened}
Let $C$ be a binary self-dual $[n=2k>2,k,d]$ code and
$C_0=\{(x_1,\dots,x_n)\in C, \ x_{n-1}=x_n\}$. If the last two
coordinates of $C$ are not equal and $C_1$ is the punctured code
of $C_0$ on the coordinate set $\{n-1,n\}$ then
$C_1$ is a self-dual $[n-2,k-1,d_1\ge d-2]$ code. 
\end{prop}

\begin{proof}
Let $n>2$ and the last two coordinates of the code $C$ are not
equal. This means that there is a codeword $y\in C$ such that
$y_{n-1}\neq y_n$. Therefore $C_0$ is a subcode of $C$ with
dimension $k-1$, and $(00\dots 011)\not\in C$. The punctured code
of $C_0$ on the coordinate set $\{n-1,n\}$ is
$$C_1=\{(x_1,x_2,\dots,x_{n-2}) \ | \
x=(x_1,x_2,\dots,x_{n-2},x_{n-1},x_n)\in C_0\}.$$ Since $(00\dots
011)\not\in C$, $\dim C_1 =\dim C_0=k-1$. Moreover, if
$x=(x_1,x_2,\dots,x_{n-2},a,a)$, $y=(y_1,y_2,\dots,y_{n-2},b,b)\in
C_0$ then $x_1y_1+x_2y_2+\cdots+x_{n-2}y_{n-2}+ab+ab=0$. Hence
$x_1y_1+x_2y_2+\cdots+x_{n-2}y_{n-2}=0$ for any two codewords
$(x_1,\dots,x_{n-2}), (y_1,\dots,y_{n-2})\in C_1$. It follows that
the code $C_1$ is self-orthogonal, and since its dimension is a
half of its length, it is self-dual.
\end{proof}

For our construction, we use the code $C_1$ of dimension $k-1$ to
obtain a self-dual code of dimension $k$ and length $2k$. We
describe this in the following corollary. Let $C_1$ be a binary
self-dual $[2k-2,k-1]$ code with a generator matrix $G_1$. Let us
extend $G_1$ with two equal columns $(a_1,a_2,\dots,a_{k-1})^T$
and consider the code $C_0$ generated by the matrix
\[
G_0=\left( \begin{array}{c|cc} &a_1&a_1\\
G_1&\vdots&\vdots\\
&a_{k-1}&a_{k-1}\\
\end{array}\right).
\]
We can choose the vector $(a_1,a_2,\dots,a_{k-1})$ such that
$(11\dots 11)\in C_0$.  Obviously, $C_0$ is a self-orthogonal
$[n=2k,k-1]$ code and its dual code is $C_0^\perp=\langle C_0,
(00\dots011), x\rangle$ where $x\in C_0^\perp\setminus \langle
C_0,(00\dots011)\rangle$ and $\wt(x)$ is even. Consider the last
two coordinates of the vector $x$. If $x_{n-1}=x_n$ then
$(x_1,x_2,\dots,x_{n-2})\in C_1^\perp=C_1$ and therefore
$x\in\langle C_0,(00\dots011)\rangle$. Hence $x_{n-1}\neq x_n$.
The next corollary follows immediately.

\begin{cor}\label{cor2}
The code $C=\langle C_0,x\rangle=C_0\cup (x+C_0)$ is a binary
self-dual $[2k,k]$ code.
\end{cor}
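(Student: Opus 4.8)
The plan is to verify directly that $C=\langle C_0,x\rangle$ is a self-dual $[2k,k]$ code, using only the facts established in the discussion preceding the corollary: $C_0$ is a self-orthogonal code of length $2k$ and dimension $k-1$, $x\in C_0^\perp$, $x\notin\langle C_0,(00\dots011)\rangle$, and $\wt(x)$ is even. For the dimension, note that $C_0\subseteq\langle C_0,(00\dots011)\rangle$ while $x\notin\langle C_0,(00\dots011)\rangle$, so $x\notin C_0$; hence adjoining $x$ to a generator matrix of $C_0$ raises the $\F_2$-dimension by exactly one, giving $\dim C=(k-1)+1=k$, and $C=C_0\cup(x+C_0)$ is a union of two cosets of $C_0$. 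Thus $C$ has length $2k$ and dimension $k$.

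For self-orthogonality it suffices to check that any two generators of $C$ are mutually orthogonal, i.e.\ that $C_0$ is self-orthogonal, that $x$ is orthogonal to every codeword of $C_0$, and that $x\cdot x=0$. The first is given; the second holds because $x\in C_0^\perp$; and the third follows because over $\F_2$ one has $x\cdot x=\sum_i x_i^2=\sum_i x_i=\wt(x)\bmod 2$, which equals $0$ since $\wt(x)$ is even. Hence for $c_1,c_2\in C_0$ and $\varepsilon_1,\varepsilon_2\in\F_2$,
\[
(c_1+\varepsilon_1 x)\cdot(c_2+\varepsilon_2 x)=c_1\cdot c_2+\varepsilon_2(c_1\cdot x)+\varepsilon_1(x\cdot c_2)+\varepsilon_1\varepsilon_2(x\cdot x)=0,
\]
so $C\subseteq C^\perp$.

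Finally, since $C\subseteq C^\perp$ and $\dim C^\perp=2k-\dim C=k=\dim C$, we conclude $C=C^\perp$, so $C$ is a binary self-dual $[2k,k]$ code. There is no genuinely difficult step here: the corollary is a bookkeeping consequence of Theorem~\ref{thm:shortened} together with the explicit construction of $C_0$ and $x$. The only point that is not purely formal is the identity $x\cdot x=0$, which rests on the evenness of $\wt(x)$; and this is automatic once $(a_1,\dots,a_{k-1})$ is chosen so that $(11\dots11)\in C_0$, because then every vector of $C_0^\perp$ is orthogonal to $(11\dots11)$ and therefore has even weight.
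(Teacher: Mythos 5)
Your proof is correct and follows exactly the route the paper intends: the paper states the corollary "follows immediately" from the preceding paragraph, which establishes precisely the ingredients you use ($C_0$ self-orthogonal of dimension $k-1$, $x\in C_0^\perp\setminus\langle C_0,(00\dots011)\rangle$ with $\wt(x)$ even), and you simply carry out the immediate verification of dimension, self-orthogonality, and the dimension count $\dim C^\perp=2k-k=k$. Your closing observation that $x\cdot x=0$ is automatic once $(11\dots11)\in C_0$ is also consistent with the paper's setup.
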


We will call the code $C$ a child type code (CTC) of $C_1$, and
$C_1$ - a parent type code (PTC) of $C$. Actually, we have two
possible self-dual codes obtained in this way from $C_1$, namely
$\langle C_0,x\rangle$ and $\langle C_0,(00\dots011)+x\rangle$,
but both codes are equivalent.

\begin{remark}\rm
To describe the search tree for our algorithm, we use the terms
CTC and PTC. In the literature on algorithms \cite{KO, McK} the
terms are parent and child, but as these terms are used in a
different way in some papers and chapters devoted to self-dual
codes \cite{HP}, we decided to set child type code and parent type
code.
\end{remark}

In this way all self-dual codes of length $n=2k$ can be
constructed on the base only on the inequivalent self-dual codes
of length $n-2$. Indeed, if we take two equivalent self-dual codes
$C_1\cong C_1'$ and use the same vector $(a_1,a_2,\dots,a_{k-1})$,
we can obtain equivalent self-dual codes of length $2k$ via the
described construction. Let $G_1$ be a generator matrix of $C_1$
and $P$ be a permutation matrix such that $G_1P$ generates the
code $C_1'$. Then the matrix
\[
\left( \begin{array}{c|cc}
xP&x_{n-1}&x_n\\
G_1P&a&a\\
\end{array}\right)
\]
generates a self-dual code equivalent to $C$, namely the code
$C'$. Furthermore, if $\pi\in S_{n-2}$ is the permutation
corresponding to the matrix $P$ then $\widehat{\pi}\in S_n$ sends
$C$ to $C'$ where $\widehat{\pi}(i)=\pi(i)$ for $1\le i\le n-2$,
and $\widehat{\pi}(i)=i$ for $i\in\{n-1,n\}$.

Let see now what happens if we take different vectors $a$ and $b$
of length $k-1$ and use them in the described construction for the
same self-dual $[2k-2,k-1]$ code $C_1$ with a generator matrix
$G_1$. Consider the elements of the automorphism group $\Aut(C_1)$
as permutation matrices of order $n-2$. To any permutation matrix
$P\in \Aut(C_1)$ we can correspond an invertible matrix $A_P\in
\GL(k-1,2)$ such that $G_1'=G_1P=A_PG_1$, since $G_1'$ is another
generator matrix of $C_1$. In this way we obtain a homomorphism $f
\ : \ \Aut(C_1) \longrightarrow  \GL(k-1,2)$. Consider the action
of $Im (f)$ on the set $\F_2^{k-1}$ defined by $A(x)=Ax^T$ for
every $x\in \F_2^{k-1}$.

\begin{theorem}\label{thm:punctured2aut} {\rm\cite{Harada36}}
The matrices $(G_1 \ a^T \ a^T)$ and $(G_1 \ b^T \ b^T)$  generate
equivalent codes if and only if the vectors $a$ and $b$ belong to
the same orbit under the action of $Im (f)$ on $\F_2^{k-1}$.
\end{theorem}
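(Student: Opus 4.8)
The plan is to prove both directions by constructing explicit permutations. For the ``if'' direction, suppose $b = A(a) = A_P a^T$ for some $P \in \Aut(C_1)$, where $A_P = f(P)$. Since $P$ is a permutation matrix with $G_1 P = A_P G_1$, I would check that left-multiplying the matrix $(G_1\ a^T\ a^T)$ by $A_P$ and right-multiplying by the block-diagonal permutation matrix $\mathrm{diag}(P, I_2)$ (acting as $P$ on the first $n-2$ coordinates and fixing the last two) yields $(A_P G_1 P^{-1} \cdot P \ A_P a^T\ A_P a^T) = (A_P G_1\ b^T\ b^T)$; since $A_P$ is invertible, this generates the same row space as $(G_1\ b^T\ b^T)$. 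Hence the two codes are related by the coordinate permutation $\widehat{\pi}$ (in the notation already introduced in the excerpt), so they are equivalent.

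For the ``only if'' direction, suppose $(G_1\ a^T\ a^T)$ and $(G_1\ b^T\ b^T)$ generate equivalent self-dual codes $C$ and $C'$, via some $\sigma \in S_n$ with $\sigma(C) = C'$. The key structural fact I would invoke is Lemma~\ref{Lemma:prop}: in a self-dual code no three coordinates are identically equal, so the last two coordinates $\{n-1, n\}$ of $C$ form the \emph{unique} pair of equal coordinates, and likewise for $C'$. Therefore any equivalence $\sigma$ must map the pair $\{n-1,n\}$ to itself, so $\sigma$ restricts to a permutation $\pi \in S_{n-2}$ on the first $n-2$ coordinates (the action on the last two coordinates being either the identity or a transposition, which does not affect the argument since those columns are equal). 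Puncturing is compatible with this: $\pi$ must send the punctured code $C_1$ (the parent of $C$) to the punctured code of $C'$, which is again $C_1$ since both children have the same parent. Thus $\pi$ corresponds to a permutation matrix $P \in \Aut(C_1)$.

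It remains to track what $\pi$ does to the extending vector. Writing $A_P = f(P) \in \GL(k-1,2)$ so that $G_1 P = A_P G_1$, the image $\sigma(C)$ is generated by the rows of $(G_1 P \ a^T\ a^T)$ with the last two columns possibly swapped, i.e.\ by $(A_P G_1 \ a^T\ a^T)$; multiplying on the left by $A_P^{-1}$, this code has generator matrix $(G_1 \ (A_P^{-1}a)^T\ (A_P^{-1}a)^T)$ — but wait, I should be careful about the direction: reworking this so that $\sigma(C) = C'$ forces the extending vector of $C'$, namely $b$, to satisfy $b^T = A_P a^T$, i.e.\ $b = A(a)$ for $A = f(P) \in \mathrm{Im}(f)$, which is exactly the statement that $a$ and $b$ lie in the same $\mathrm{Im}(f)$-orbit. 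The main obstacle I anticipate is bookkeeping: carefully separating the role of the coordinate permutation on the last two positions from its role on the first $n-2$, and verifying that passing through the (non-canonical) choice of generator matrix $G_1$ — which is what introduces the matrix $A_P$ and hence the $\mathrm{Im}(f)$-action — is done consistently in both directions. Since this result is attributed to \cite{Harada36}, I would present the argument concisely, emphasizing the use of Lemma~\ref{Lemma:prop} to pin down the distinguished coordinate pair, and leave the matrix manipulations as routine.
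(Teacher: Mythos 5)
Your ``if'' direction is essentially the paper's argument: left-multiply $(G_1\ a^T\ a^T)$ by $A_P$ and right-multiply by the block permutation matrix fixing the last two positions, then note that row operations do not change the generated code. That part is correct.

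The ``only if'' direction has a genuine gap at precisely the step you call ``the key structural fact''. First, the codes generated by $(G_1\ a^T\ a^T)$ and $(G_1\ b^T\ b^T)$ are the self-\emph{orthogonal} $[2k,k-1]$ codes $C_0$ and $C_0'$, not self-dual codes, so Lemma~\ref{Lemma:prop} does not apply to them. Second, even where that lemma applies it only forbids \emph{three or more} mutually equal coordinates; it never makes $\{n-1,n\}$ the \emph{unique} equal pair (the code $i_2^k$ has $k$ disjoint equal pairs). Concretely, for $C_1=i_2\oplus i_2$ and $a=(1,0)$ the matrix $(G_1\ a^T\ a^T)$ has columns $1$, $2$, $n-1$, $n$ all equal, and the permutation exchanging $1\leftrightarrow n-1$, $2\leftrightarrow n$ is an equivalence of $C_0$ with itself that does not preserve $\{n-1,n\}$. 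So your deduction that \emph{any} equivalence $\sigma$ must stabilize $\{n-1,n\}$, and hence restricts to some $\pi\in\Aut(C_1)$, fails. What must actually be shown is that if $C_0\cong C_0'$ then \emph{some} equivalence can be chosen to preserve the last pair; the paper itself only asserts the resulting decomposition (``It turns out that there is a matrix $B$ and a permutation matrix $P$ such that \dots''), deferring to \cite{Harada36}, whereas your proposal replaces that assertion with a justification that is false. The subsequent bookkeeping (extracting $A_P=f(P)$ from $G_1P=A_PG_1$ and concluding $a^T=A_Pb^T$) matches the paper and is fine once the decomposition of $\sigma$ is actually available.
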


\begin{proof}
Let the matrices $(G_1 \ a^T \ a^T)$ and $(G_1 \ b^T \ b^T)$
generate the codes $C_0$ and $C_0'$, respectively, and
$a^T=A_Pb^T$, where $P\in\Aut(C_1)$. Then
\[
(G_1 \ a^T \ a^T)\left( \begin{array}{cc}
P&0\\
0&I_2\\
\end{array}
\right)=(G_1P \ a^T \ a^T)=(A_PG_1 \ A_Pb^T \ A_Pb^T)=A_P(G_1 \
b^T \ b^T).
\]
Since $A_P(G_1 \ b^T \ b^T)$ is another generator matrix of the
code $C_0'$, both codes are equivalent.

Conversely, let $C_0\cong C_0'$. It turns out that there is a
matrix $B\in\GL(k,2)$ and an $n\times n$ permutation matrix $P$
such that $(G_1 \ a^T \ a^T)=B(G_1 \ b^T \ b^T)P=BA_P(G_1 \ b^T \
b^T)$. Hence $BA_pG_1=G_1$ and therefore $BA_P$ defines an
automorphism of the code $C_1=\langle G_1\rangle$. Since $a^T=BA_P
b^T$, the vectors $a$ and $b$ belong to the same orbit under the
action of $Im (f)$ on $\F_2^{k-1}$.
\end{proof}

Let now $G=\left(%
\begin{array}{cc}
  x & 1 \ \ 0 \\
  G_1 & a^T \ a^T\\
\end{array}%
\right)$ be a generator matrix of the self-dual $[n,n/2,d]$ code
$C$. If $P\in \Aut(C_1)$ and $y=xP$ then
\[ G\left( \begin{array}{cc}
P&0\\
0&I_2\\
\end{array}
\right)=\left(%
\begin{array}{cc}
  x & 1 \ \ 0 \\
  G_1 & a^T \ a^T\\
\end{array}%
\right)\left( \begin{array}{cc}
P&0\\
0&I_2\\
\end{array}
\right)=\left(%
\begin{array}{cc}
  xP & 1 \ \ 0 \\
  G_1P & a^T \ a^T\\
\end{array}%
\right)\]
\[=\left(%
\begin{array}{cc}
  y & 1 \ \ \ \ 0 \\
  A_PG_1 & A_Pb^T \ A_Pb^T\\
\end{array}%
\right)=\left(%
\begin{array}{cc}
  1& 0\dots 0 \\
  0^T&A_P\\
\end{array}%
\right)\left(%
\begin{array}{cc}
  y & 1 \ \ 0 \\
  G_1 & b^T \ b^T\\
\end{array}%
\right)
\]

Hence the code $C$ is equivalent to the code generated by the matrix $G'=\left(%
\begin{array}{cc}
  y & 1 \ \ 0 \\
  G_1 & b^T \ b^T\\
\end{array}%
\right)$.

The following proposition reduces the number of the considered
cases. It is a particular case of Theorem 1 from \cite{Kim36-38}.

\begin{prop}\label{thm:Harada}
If $C$ is a binary self-dual $[n=2k>2,k,d]$ code then $C$ is
equivalent to a code with a generator matrix in the form
\begin{equation}\label{matrix1}
G=\left(
\begin{array}{cccc}
x_1 \dots x_{k-1}&00\dots 0&1&0\\
&&x_1&x_1\\
I_{k-1}&A&\vdots&\vdots\\
&&x_{k-1}&x_{k-1}\\
\end{array}
\right)
\end{equation}
and the matrix $(I_{k-1}|A)$ generates a self-dual $[n-2,k-1]$
code.
\end{prop}

\begin{proof}
If the last two coordinates of $C$ are equal then according to
Lemma \ref{Lemma:prop} the first and the last coordinates are not
equal and we can transpose them. So we can consider the case when
the last two coordinates of $C$ are not equal without loss of
generality.

Take a generator matrix in systematic form for the code $C_1$
obtained from $C$ by the construction in Proposition
\ref{thm:shortened}. Then
\[
\left(
\begin{array}{cccc}
&&x_1&x_1\\
I_{k-1}&A&\vdots&\vdots\\
&&x_{k-1}&x_{k-1}\\
\end{array}
\right)
\]
is a generator matrix of $C_0$ in systematic form. Since $C_0$ is
a self-orthogonal $[2k,k-1]$ code, $C_0^\perp=\langle C_0,
(00\dots011), y\rangle$ where $y\in C\setminus C_0$. Hence
$y_{n-1}\neq y_n$ and suppose $y_{n-1}=1$, $y_n=0$.

Consider the vector $x=(x_1,x_2,\dots,x_{k-1},00\dots010)$.
Obviously, $x\perp C_0$ and hence $x\in C_0^\perp$. But $x\not\in
C_0$, $x\not\in (00\dots011)+C_0$, $x\not\in (00\dots011)+y+C_0$
and therefore $x\in y+C_0\subset C$. It turns out that $x\in C$
and we can take a generator matrix for $C$ in the needed form.
\end{proof}

If we start with a self-dual $[2k-2,k-1]$ code $C_1$ with a
generator matrix in systematic form, for the construction in the
last theorem we have to use a vector $(x_1,x_2,\dots,x_{k-1})$ of
odd weight. Then we are sure that the all-ones vector $(11\dots
11)$ belongs to the subcode $C_0$ and the code $C$ generated by
the matrix (\ref{matrix1}) is a self-dual $[2k,k]$ code.

Theorem \ref{thm:punctured2aut} and Proposition \ref{thm:Harada}
are very important for our search. We use them to write a
recursive algorithm which gives us all self-dual $[n,n/2]$ codes
starting from the code $i_2$.

\section{The Algorithm}
\label{Sect:Algorithm}

In this section we present an algorithm for generating self-dual
codes of a given length $n$. We discuss the background and give
preliminary definitions and notations.

Let $\Omega_k$ be the set of all binary self-dual codes of
dimension $k$ (and length $2k$). We consider the action of the
symmetric group $S_{2k}$ on the set $\Omega_k$, $k=1,2,\dots$.
This action induces an equivalence relation in $\Omega_k$ as two
codes $C_1, C_2\in\Omega_k$ are equivalent ($C_1\cong C_2$) if
they belong to the same orbit. Hence the equivalence classes for
the defined relation are the orbits with respect to the action of
the symmetric group. According to Proposition \ref{thm:shortened},
if we take two nonproportional coordinates in any code belonging
to $\Omega_k$, $k\ge 2$, we can obtain a code from $\Omega_{k-1}$.
Conversely, if we take all codes from $\Omega_{k-1}$ and extend
them using Corollary \ref{cor2} in all possible ways, we will
obtain all codes from $\Omega_k$. The construction of the
self-dual codes of dimension $k$ using the codes from
$\Omega_{k-1}$ seems to be trivial, but actually this is a
difficult problem and the question is how to find only the
inequivalent codes. To do this, we develop a McKey type algorithm
for isomorph-free generation \cite{KO, McK}.

The algorithm is an exhaustive search over the set of codes
$\Omega=\Omega_1\cup\Omega_2\cup \dots\cup\Omega_k$ and this set
is our search space. The generation process is described by a
rooted tree (or forest). The nodes in the tree are objects from
the search space $\Omega$. From a self-dual code $A$ of length
$2k-2$ corresponding to the node $\overline{A}$, we obtain
self-dual codes of length $2k$ which are child type codes of $A$.
To construct the child type codes, we use a generator matrix $G_A$
of $A$ in systematic form and the binary vectors from $\F_2^{k-1}$
of odd weight. We denote the set of inequivalent child type codes
by $Child(A)$. The elements of $Child(A)$ correspond to the nodes
of the next level which are connected to $\overline{A}$ by edges.
To find only the inequivalent child type codes, we use Theorem
\ref{thm:punctured2aut}.
Practically, the rule $A\rightarrow Child(A)$ which juxtapose
child type codes to a code defines our search tree. The execution
of the algorithm can be considered as walking the search tree and
visiting all nodes through the edges. This can be done by a depth
first search. We present a pseudocode of the algorithm in Table
\ref{Algorithm}.


The following definitions and theorems help us to explain the
algorithm and to prove its correctness.

\begin{definition} A canonical representative map for the
action of the group $S_{2k}$ on the set $\Omega_k$ is a function
$\rho:\Omega_k\rightarrow\Omega_k$ that satisfies the following
two properties:

1. for all $X\in\Omega_k$ it holds that $\rho(X)\cong X$,

2. for all $X, Y\in\Omega_k$ it holds that $X\cong Y$ implies
$\rho(X) = \rho(Y)$.
\end{definition}

For a code $C\in\Omega_k$, the code $\rho(C)$ is the canonical
form of $C$ with respect to $\rho$. Analogously, $C$ is in
canonical form if $\rho(C)=C$. The code $\rho(C)$ is the canonical
representative of its equivalence class with respect to $\rho$.

We can take for a canonical representative of one equivalence
class a code which is more convenient for our purposes. Suppose
that $A$ is  the canonical representative of this class with
respect to a canonical representative map $\rho$. We can take
$B\cong A$ for a canonical representative for this class if we
change the canonical representative map in the following way:
$\rho'(X)=B$ if $X\cong B$, $\rho'(X)=\rho(X)$ if $X\not\cong B$.
According to Lemma \ref{Lemma:prop} a self-dual code does not have
more than two equal coordinates, hence if $k\ge 2$ we can take for
a canonical representative of any equivalence class a code for
which the last two coordinates are not equal.

\begin{example} \rm If we order the codewords in any code
lexicographically and then compare the codes according to a
lexicographical ordering of the columns, we will have one biggest
code in any equivalence class. We can take this code as a
canonical representative of its class.
\end{example}

\begin{definition} Let $C$ be a self-dual code of length $n$ and $\rho(C)\cong C$ be its canonical form.
A permutation $\phi_C\in S_n$ is called a canonical permutation of
the code $C$, if $\phi_C(C)=\rho(C)$.
\end{definition}

For a fixed canonical representative map $\rho$, the canonical
permutation of $C$ depends on the automorphism group of the code
and the permutations from the coset $\phi_C\Aut(C)$ can also be
canonical permutations since
$\phi_C\sigma(C)=\phi_C(\sigma(C))=\phi_C(C)=\rho(C)$ for any
$\sigma\in\Aut(C)$.

We obtain the automorphism group and a canonical form of a given
code $A$ using a modification of the algorithm presented in the
paper \cite{Iliya-aut}. This algorithm gives the order of the
group, a set of generating elements, and a canonical permutation.

The parent test can be defined in the following way. Suppose that
$A$ is a self-dual code of dimension $k-1$ and $B\in Child(A)$.
This means that $B$ is obtained from $A$ by the construction of
Proposition \ref{thm:Harada}. The parent test for $B$ depends on
the automorphism group of the code $B$.
Let $c_1$ and $c_2$ be the coordinate positions of the code $B$
for which $\phi_B(c_1)=2k, \phi_B(c_2)=2k-1$ where $\phi_B$ is the
canonical permutation of $B$.
The corresponding two columns of $B$ are not equal, because the
last two coordinates of the canonical representative are not
equal. We call the coordinates $c_1$ and $c_2$ special for the
code $B$ with respect to the canonical permutation $\phi_B$. If
there is an automorphism $\sigma$ of $B$ such that
$\{\sigma(c_1),\sigma(c_2)\}=\{2k-1,2k\}$ then the code $B$ passes
the parent test.
In such case we can change the canonical permutation, taking
$\phi_B\sigma^{-1}$ instead of $\phi_B$. Then the last two
coordinates of $B$ are special with respect to the new canonical
permutation. So a code passes the parent test, if there is a
canonical permutation $\phi$ for this code such that the last two
coordinates are special with respect to $\phi$.

Obviously, the canonical representative $B$ of one equivalence
class passes the parent test. If $B_1\cong B$ also passes the
parent test, then there is a permutation $\phi:B\rightarrow B_1$
such that $1\le\phi(i)\le n-2$ for $1\le i\le n-2$ and
$\{\phi(n-1),\phi(n)\}=\{n-1,n\}$, where $n$ is the length of the
codes. This means that the parent type codes of these two codes
are equivalent, too. So we have the following lemma.

\begin{lemma}\label{Lemma:parent}
If $B_1$ and $B_2$ are two equivalent self-dual $[2k,k]$ codes
which pass the parent test, their parent type codes are also
equivalent.
\end{lemma}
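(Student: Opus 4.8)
The plan is to trace through the construction machinery already set up and isolate what ``passing the parent test'' buys us. By Theorem~\ref{thm:shortened} each self-dual $[2k,k]$ code with unequal last two coordinates has a well-defined parent, obtained by forming $C_0=\{(x_1,\dots,x_n)\in C:x_{n-1}=x_n\}$ and puncturing on $\{n-1,n\}$; I want to show that if $B_1\cong B_2$ and both have a canonical permutation witnessing that their last two coordinates are special, then their parents coincide up to equivalence. First I would fix canonical permutations $\phi_{B_1}$ and $\phi_{B_2}$ with respect to which the last two coordinates of $B_1$ and $B_2$, respectively, are special; such permutations exist precisely because each code passes the parent test, by the discussion immediately preceding the lemma.

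Next I would compose these to move directly between $B_1$ and $B_2$. Since $\rho(B_1)=\rho(B_2)$ (as $B_1\cong B_2$ and $\rho$ is a canonical representative map), the permutation $\phi:=\phi_{B_2}^{-1}\phi_{B_1}$ sends $B_1$ to $B_2$. The key point is to check what $\phi$ does to the coordinate pairs: because the last two coordinates of $B_1$ are special with respect to $\phi_{B_1}$, we have $\phi_{B_1}(\{n-1,n\})=\{n-1,n\}$, and likewise $\phi_{B_2}(\{n-1,n\})=\{n-1,n\}$, hence $\phi_{B_2}^{-1}(\{n-1,n\})=\{n-1,n\}$. Therefore $\phi(\{n-1,n\})=\{n-1,n\}$, and consequently $\phi$ restricts to a bijection of $\{1,\dots,n-2\}$ onto itself.

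Finally I would push this stabilizing permutation down to the parents. A permutation $\phi$ with $\phi(\{n-1,n\})=\{n-1,n\}$ maps the subcode of $B_1$ defined by equality of the last two coordinates onto the corresponding subcode of $B_2$; that is, $\phi(C_0(B_1))=C_0(B_2)$. Puncturing on the last two coordinates then intertwines the actions, so the restriction $\pi=\phi|_{\{1,\dots,n-2\}}\in S_{n-2}$ sends the parent $C_1(B_1)$ to the parent $C_1(B_2)$. Hence the parents are equivalent, which is the claim. The main obstacle is really just bookkeeping: one must be careful that ``special'' genuinely forces the pair $\{n-1,n\}$ to be stabilized (not merely that the two special coordinates are mapped to $n-1,n$ in some order by $\phi_B$), and that puncturing is compatible with permutations fixing the punctured set --- both of which follow directly from the definitions of special coordinate and of the punctured code used in Theorem~\ref{thm:shortened}.
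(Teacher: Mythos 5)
Your proof is correct and follows essentially the same route as the paper: the paper's justification (given in the paragraph immediately preceding the lemma) likewise composes the two canonical permutations to obtain a permutation $\phi:B_1\rightarrow B_2$ that stabilizes $\{n-1,n\}$ setwise and hence fixes $\{1,\dots,n-2\}$ setwise, and then concludes that the parents are equivalent. You simply spell out more carefully the final descent step --- that such a $\phi$ carries $C_0(B_1)$ onto $C_0(B_2)$ and is compatible with puncturing --- which the paper leaves implicit.
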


\begin{table}[!htb]\rm
\begin{center}
\caption{The main algorithm} \vspace*{0.2in} \label{Algorithm}
\begin{tabular}{l}
\noalign{\hrule height1pt}
Procedure Augmentation($A$: binary self-dual code);\\
begin\\
~~~If the dimension of $A$ is equal to $k$ then\\
~~~~~~begin\\
~~~~~~~~~$U_k:= U_k\cup \{A\}$;\\
~~~~~~~~~PRINT ($A, \sharp \Aut(A)$);\\
~~~~~~end;\\
~~~If the dimension of $A$ is less than $k$ then\\
~~~~~~begin\\
~~~~~~~~~find the set $Child(A)$ of all inequivalent child type
codes of
$A$;\\
~~~~~~~~~~~~~~~~~~~~~( using already known $\Aut(A)$;)\\
~~~~~~~~~For all codes $B$ from the set $Child(A)$ do the following:\\
~~~~~~~~~~~~if $B$ passes the parent test  then Augmentation($B$);\\
~~~~~~end;\\
end;\\
\\
Procedure Main;\\
Input:  ~~~$U_r$ -- nonempty set of binary self-dual $[2r,r]$ codes;\\
~~~~~~~~~~~~$k \ $ -- dimension of the output codes $(k>r)$;\\
Output: $U_k$  -- set of $[2k,k]$ binary self-dual codes;\\
begin\\
~~~$U_k:=\emptyset$ (the empty set);\\
~~~for all codes $A$ from $U_r$ do the following:\\
~~~begin\\
~~~~~~find the automorphism group of $A$;\\
~~~~~~Augmentation($A$);\\
~~~end;\\
end;\\
  \noalign{\hrule height1pt}
\end{tabular}
\end{center}
\end{table}



\begin{lemma}\label{Lemma:equ-parents}
Let $A_1$ and $A_2$ be two equivalent self-dual codes of dimension
$r$. Then for any child type code $B_1$ of $A_1$ which passes the
parent test, there is a child type code $B_2$ of $A_2$, equivalent
to $B_1$, such that $B_2$ also passes the parent test.
\end{lemma}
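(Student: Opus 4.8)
The plan is to leverage the transitivity of equivalence together with the two preparatory facts already developed in the excerpt: the construction-compatibility statement (equivalent parents built with the same auxiliary vector yield equivalent children, and more precisely the permutation $\widehat{\pi}$ on $n-2$ coordinates lifts to one on $n$ coordinates fixing the last two) and Lemma~\ref{Lemma:parent} (two equivalent children passing the parent test have equivalent parents). First I would fix a permutation $\pi\in S_{2r}$ with $\pi(A_1)=A_2$, and take an arbitrary child $B_1$ of $A_1$ passing the parent test; by definition $B_1$ arises from a generator matrix of $A_1$ in systematic form by adjoining a vector $a$ of odd weight as in Theorem~\ref{thm:Harada}.

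Next I would apply $\pi$ (suitably extended) to transport the construction: since $\pi$ carries a generator matrix of $A_1$ to a generator matrix of $A_2$ (not necessarily systematic, but we can re-systematize, and the argument following Theorem~\ref{thm:punctured2aut} shows how $A_P\in\GL(k-1,2)$ absorbs the row operations), the same adjunction applied on the $A_2$ side produces a self-dual $[2k,k]$ code $B_1'$ that is equivalent to $B_1$, via a permutation $\widehat{\pi}$ that acts as $\pi$ on the first $2r$ coordinates and fixes the two new coordinates $\{2k-1,2k\}$. Now $B_1'$ is a child of $A_2$ by construction. However, $B_1'$ itself need not pass the parent test, so I cannot simply take $B_2=B_1'$.

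The remedy is to pass to the canonical representative of the equivalence class of $B_1$ (equivalently of $B_1'$). Let $B_2:=\rho(B_1')=\rho(B_1)$, which is equivalent to $B_1$ and, being the canonical representative, passes the parent test (this was noted just before Lemma~\ref{Lemma:parent}: ``the canonical representative $B$ of one equivalent class passes the parent test''). It remains to check that $B_2$ is a child of $A_2$. For this I would invoke Lemma~\ref{Lemma:parent}: both $B_1'$ and $B_2$ are equivalent, and I need them to pass the parent test — but $B_1'$ may not. So instead I would argue directly: $B_2$ passes the parent test, so by the analysis preceding Lemma~\ref{Lemma:parent} its parent, obtained by the shortening construction of Theorem~\ref{thm:shortened} on two special coordinates, is well-defined up to equivalence; and since $B_2\cong B_1\cong B_1'$ and $B_1'$ is a child of $A_2$, Lemma~\ref{Lemma:parent} applied to the pair $B_2$ and (the parent-test-passing representative among the codes equivalent to $B_1'$) forces the parent of $B_2$ to be equivalent to $A_2$. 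Concretely: the parent of any code equivalent to $B_1'$ that passes the parent test is equivalent to the parent of $B_1$ (which is $A_1$) by Lemma~\ref{Lemma:parent} combined with $A_1\cong A_2$; hence the parent of $B_2$ is equivalent to $A_2$, and therefore $B_2$ is (equivalent to, hence can be taken as) a child of $A_2$ that passes the parent test.

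The main obstacle is the bookkeeping in the previous paragraph: one must be careful that "child of $A_2$" is used up to equivalence of the parent, i.e.\ that $Child(A_2)$ as defined in the algorithm consists of inequivalent codes obtained from $A_2$, and that having a parent equivalent to $A_2$ and passing the parent test is exactly the condition placing $B_2$ in $Child(A_2)$. So the crux is to reconcile the "parent up to equivalence" viewpoint (Lemma~\ref{Lemma:parent}) with the concrete augmentation rule $A\mapsto Child(A)$, i.e.\ to show that if $B_2$ passes the parent test and its parent is equivalent to $A_2$, then $B_2$ (or an equivalent copy used as the canonical representative) literally appears as a child produced from $A_2$. This follows because the parent-test coordinates of $B_2$ can be moved to the last two positions by an automorphism of $B_2$, after which shortening yields a code equivalent to $A_2$, and re-running the construction of Theorem~\ref{thm:Harada} from $A_2$ with the appropriate odd-weight vector reproduces $B_2$ up to equivalence — which is all that is needed since $Child(A_2)$ is taken up to equivalence.
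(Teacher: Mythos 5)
Your opening move --- transporting the construction along $\pi$ to produce a child $B_1'$ of $A_2$ equivalent to $B_1$ via the lifted permutation $\widehat{\pi}$ that fixes the two new coordinates --- is exactly the paper's first step. But you then abandon $B_1'$ on the grounds that it ``need not pass the parent test,'' and this is precisely where you miss the one idea the proof turns on: because $\widehat{\pi}$ fixes the coordinates $2r+1$ and $2r+2$, the parent test \emph{does} transfer from $B_1$ to $B_1'$. Explicitly, if $\phi_1$ is a canonical permutation of $B_1$ with $\{\phi_1(2r+1),\phi_1(2r+2)\}=\{2r+1,2r+2\}$, then $\phi_2=\phi_1\widehat{\pi}^{-1}$ is a canonical permutation of $B_1'$ (it sends $B_1'$ to $\phi_1(B_1)=\rho(B_1)=\rho(B_1')$) and satisfies $\{\phi_2(2r+1),\phi_2(2r+2)\}=\{\phi_1(2r+1),\phi_1(2r+2)\}=\{2r+1,2r+2\}$, so $B_1'$ passes the parent test and one simply takes $B_2=B_1'$. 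That is the paper's entire proof.

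The detour you take instead does not close the gap. Setting $B_2:=\rho(B_1)$ gives a code that passes the parent test, but its literal parent (the code obtained by shortening on its last two coordinates) is only \emph{equivalent} to $A_2$, not equal to it, so $B_2$ is not a child of $A_2$. Your repair --- re-run the construction from $A_2$ itself to get a child that ``reproduces $B_2$ up to equivalence, which is all that is needed since $Child(A_2)$ is taken up to equivalence'' --- is not valid: the parent test is not invariant under arbitrary equivalence (that is its entire purpose), so exhibiting a child of $A_2$ merely equivalent to $B_2$ does not show that this child passes the test. To finish from there you would again need the observation that the transporting permutation fixes the last two coordinates and hence carries a witnessing canonical permutation across --- i.e., exactly the argument you skipped, and which applied directly to $B_1'$ makes the detour unnecessary. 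The lemma genuinely rests on this coordinate-fixing property of the lifted equivalence; it does not follow from Lemma \ref{Lemma:parent} and transitivity alone.
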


\begin{proof}  Let $G_1$ be a generator matrix of
$A_1$ in systematic form and $B_1$ be the code obtained from $A_1$
and the vector $a=(a_1,\dots,a_r)$ by the construction described
in Corollary \ref{cor2} and Proposition \ref{thm:Harada}. Let
$B_2$ be the code generated by the matrix $\pi(G_1)$ and the same
vector $a$, where $\pi\in S_{2r}$ and $\pi(A_1)=A_2$. Obviously,
$\pi(G_1)$ generates the code $\pi(A_1)=A_2$. Moreover, according
to Theorem \ref{thm:punctured2aut}, $B_2$ is equivalent to all
codes obtained by $A_2$ and the vectors from the orbit with
representative $a$ under the action of $\Aut(A_2)$ on $\F_2^r$.
Since the codes $B_1$ and $B_2$ are equivalent, they have the same
canonical representative $B=\rho(B_1)=\rho(B_2)$. The code $B_1$
passes the parent test and therefore there is a canonical
permutation $\phi_1:B_1\rightarrow B$ such that the last two
coordinates of $B_1$ are special with respect to $\phi_1$. We can
take for a canonical permutation of $B_2$ the permutation
$\phi_2=\phi_1\widehat{\pi}^{-1}$, since
$\phi_1\widehat{\pi}^{-1}(B_2)=\phi_1(B_1)=\rho(B_1)=\rho(B_2)$,
where $\widehat{\pi}\in S_{2r+2}$, $\widehat{\pi}(i)=\pi(i)$ for
$i\in\{1,2,\dots,2r\}$, $\widehat{\pi}(2r+1)=2r+1$,
$\widehat{\pi}(2r+2)=2r+2$. Then
$$\{\phi_2(2r+1),\phi_2(2r+2)\}=\{\phi_1(2r+1),\phi_1(2r+2)\}=\{2r+1,2r+2\},$$
hence the last two coordinates of $B_2$ are special with respect
to the canonical permutation $\phi_2$. It turns out that the code
$B_2$ also passes the parent test.
\end{proof}

\begin{theorem}
If the set $U_s$ consists of all inequivalent binary self-dual
$[2s,s]$ codes, then the set $U_{s+1}$ obtained by the algorithm
presented in Table \ref{Algorithm} consists of all inequivalent
self-dual $[2s+2,s+1]$ codes, $s\ge 1$.
\end{theorem}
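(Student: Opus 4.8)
The statement is the inductive step that makes the recursion of Table~\ref{Algorithm} work: invoking the procedure Main with input $U_s$ and target dimension $k=s+1$ produces a set $U_{s+1}$, and one has to verify that this set is both \emph{complete} (it meets every equivalence class of self-dual $[2s+2,s+1]$ codes) and \emph{irredundant} (it contains no two equivalent codes). The plan is to assemble completeness from Lemma~\ref{Lemma:equ-parents} and irredundancy from Lemma~\ref{Lemma:parent}, using Theorems~\ref{thm:shortened}, \ref{thm:punctured2aut} and~\ref{thm:Harada} as the structural input; the real work is the coordinate bookkeeping around the parent test.

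For completeness, I would start from an arbitrary $D\in\Omega_{s+1}$ and pass to its canonical form $B=\rho(D)$. By the convention on $\rho$ (legitimate by Lemma~\ref{Lemma:prop}) the last two coordinates of $B$ are not equal, and $B$ passes the parent test, because a canonical permutation of $B=\rho(B)$ is an automorphism of $B$ and so serves as the automorphism demanded in the definition of the test. Theorem~\ref{thm:shortened} then yields a self-dual $[2s,s]$ code $C_1$ (the punctured subcode of $B$), and Theorem~\ref{thm:Harada} shows that $B$ itself is one of the children of $C_1$ in the sense of Corollary~\ref{cor2}. Since $U_s$ is complete there is $A\in U_s$ with $A\cong C_1$, and Lemma~\ref{Lemma:equ-parents} produces a child $B_2$ of $A$ that is equivalent to $B$ and again passes the parent test. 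Re-expressing $B_2$ through a systematic generator matrix of $A$, it is built from some odd-weight vector $v\in\F_2^{s}$; by Theorem~\ref{thm:punctured2aut} the code $\widetilde B\in Child(A)$ attached to the $Im(f)$-orbit of $v$ is equivalent to $B_2$, and by the computation immediately preceding Theorem~\ref{thm:Harada} this equivalence is induced by a permutation fixing the last two coordinates, so $\widetilde B$ passes the parent test exactly because $B_2$ does. Hence during Augmentation$(A)$ the code $\widetilde B$ survives the parent test and, having dimension $s+1=k$, is placed into $U_{s+1}$; as $\widetilde B\cong B_2\cong B\cong D$, the class of $D$ is represented.

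For irredundancy, suppose $D_1,D_2\in U_{s+1}$ with $D_1\cong D_2$. Each $D_i$ was inserted into $U_{s+1}$ inside a call Augmentation$(D_i)$ issued from Augmentation$(A_i)$ for some $A_i\in U_s$, so $D_i\in Child(A_i)$ and $D_i$ passed the parent test. Because $D_i$ is produced from a systematic generator matrix of $A_i$ by the construction of Theorem~\ref{thm:Harada}, the subcode of $D_i$ cut out by the last two coordinates punctures onto $A_i$; that is, $A_i$ is the parent of $D_i$ in the sense of Theorem~\ref{thm:shortened}. Lemma~\ref{Lemma:parent}, applied to the equivalent pair $D_1,D_2$ (both passing the test), then gives $A_1\cong A_2$, and since the codes in $U_s$ are pairwise inequivalent, $A_1=A_2=:A$. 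Thus $D_1$ and $D_2$ both lie in $Child(A)$, which by Theorem~\ref{thm:punctured2aut} consists of pairwise inequivalent codes, forcing $D_1=D_2$.

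The step I expect to be the main obstacle is the coordinate bookkeeping in the completeness argument. ``Passing the parent test'' is a property of a \emph{labelled} code rather than of an equivalence class, so it is not immediate that the particular child the algorithm lists in $Child(A)$ inherits this property from the child handed back by Lemma~\ref{Lemma:equ-parents}. The point that makes it go through is that the only transition actually used on the side of $A$ — moving between children attached to vectors in one $Im(f)$-orbit — is realized by a permutation that is the identity on the last two coordinates, hence commutes with everything used to define the canonical permutation and the special pair; this is exactly the content of the matrix identity worked out just before Theorem~\ref{thm:Harada}. A related point that must not be skipped is that the irredundancy step genuinely needs $Child(A)$ to be free of repeated equivalence classes, which is precisely what Theorem~\ref{thm:punctured2aut} is used to guarantee.
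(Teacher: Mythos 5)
Your proof is correct and follows essentially the same route as the paper: completeness via the canonical representative, Lemma~\ref{Lemma:equ-parents} and the completeness of $U_s$; irredundancy via the parent test and Lemma~\ref{Lemma:parent}. You are in fact slightly more careful than the paper at two points it leaves implicit — the passage from the child supplied by Lemma~\ref{Lemma:equ-parents} to the orbit representative actually stored in $Child(A)$ (fixing the last two coordinates), and the final step of irredundancy, where after the parents are forced to coincide one still needs that $Child(A)$ contains no two equivalent codes.
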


\begin{proof}
We must show that the set $U_{s+1}$ filled out in Procedure
\textsc{Augmentation}, consists only of inequivalent codes, and
any binary self-dual $[2s+2,s+1]$ code is equivalent to a code in
the set $U_{s+1}$.

Suppose that the codes $B_1, B_2\in U_{s+1}$ are equivalent. Since
these two codes have passed the parent test, their parent type
codes are also equivalent according to Lemma \ref{Lemma:parent}.
These parent type codes are self-dual codes from the set $U_s$
which consists only in inequivalent codes. We have a contradiction
here and therefore the codes $B_1$ and $B_2$ cannot be equivalent.
It follows that $U_{s+1}$ consists of inequivalent codes.

Take now a binary self-dual code $C$ of dimension $s+1$ with a
canonical representative $B$. Hence $B$ is equivalent to $C$ and
$B$ passes the parent test. Since $U_s$ consists of all
inequivalent self-dual codes of dimension $s$, the parent type
code of $B$ is equivalent to a code $A\in U_s$. According to Lemma
\ref{Lemma:equ-parents}, there is a child type code $B_A$ of $A$,
equivalent to $B$, such that $B_A$ passes the parent test. Since
the codes $B$ and $B_A$ are equivalent, so are the codes $C$ and
$B_A$. In this way we find a code in $U_{s+1}$ which is equivalent
to $C$.
\end{proof}

Applying the algorithm recursively, we have the following
corollary.

\begin{cor}
If the set $U_r$ consists of all inequivalent binary self-dual
$[2r,r]$ codes, then the algorithm presented in Table
\ref{Algorithm} generates all inequivalent self-dual $[2k,k]$
codes, $r<k$.
\end{cor}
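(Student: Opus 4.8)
The plan is to proceed by induction on $k$, with the preceding theorem serving as the engine. The base case $k=r+1$ is nothing but that theorem applied with $s=r$: since $U_r$ is by hypothesis a complete list of inequivalent $[2r,r]$ codes, the set $U_{r+1}$ produced by the algorithm is a complete list of inequivalent $[2r+2,r+1]$ codes.

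For the inductive step I would assume the statement for $k-1$, that is, that running the algorithm from $U_r$ with target dimension $k-1$ visits a complete set of inequivalent $[2(k-1),k-1]$ codes. The first thing to make precise is that the algorithm in Table~\ref{Algorithm} is a depth-first traversal and so does not literally build the intermediate sets $U_{r+1},\dots,U_{k-1}$; the point is that the outcome is independent of the traversal order. Concretely, whether \textsc{Augmentation} is invoked on a given self-dual code $B$ of intermediate dimension is decided by: ``$B\in Child(A)$ for the parent $A$ from which $B$ was generated, and $B$ passes the parent test''. By Theorem~\ref{thm:punctured2aut} the first condition only records that $B$ is, up to equivalence, one of the children of $A$; and by the definition of the parent test the second condition depends solely on $\Aut(B)$ and the canonical form of $B$, not on which parent or which generator matrix produced $B$. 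Hence the set of dimension-$j$ codes on which \textsc{Augmentation} is ever invoked equals $\{\,B\in Child(A) : A \text{ invoked at dimension } j-1,\ B \text{ passes the parent test}\,\}$, which is exactly the set $U_j$ that a run targeting dimension $j$ would output.

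With this identification, the conclusion is immediate: by the inductive hypothesis the dimension-$(k-1)$ codes visited by the algorithm form a complete set $U_{k-1}$ of inequivalent $[2(k-1),k-1]$ codes, and applying the preceding theorem with $s=k-1$ to this set shows that the dimension-$k$ codes accepted by the algorithm constitute a complete set $U_k$ of inequivalent $[2k,k]$ codes. This closes the induction and proves the corollary.

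I expect the only genuine subtlety to be the middle step: justifying that the depth-first recursion visits precisely the same nodes, level by level, as the ``breadth-first'' description on which the preceding theorem is phrased. Once one observes that the parent test is a property of the pair $(\Aut(B),\rho(B))$ and the inequivalent-children computation is a property of the orbits in Theorem~\ref{thm:punctured2aut} — both insensitive to traversal order — the remainder is a routine induction layered on top of that theorem.
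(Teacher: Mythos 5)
Your proof is correct and follows essentially the same route as the paper, which simply iterates the preceding theorem ("applying the algorithm recursively"); your explicit induction on $k$, together with the observation that the depth-first traversal visits the same codes level by level as the level-order description because the parent test and the inequivalent-children computation are insensitive to traversal order, just makes that iteration precise.
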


We can partition the set $U_r$ of all inequivalent binary
self-dual $[2r,r]$ codes into disjoint subsets $U_{r1},
U_{r2},\dots,U_{rs}$ and apply the algorithm to these subsets
independently. Denote by $U_{ki}$ the set of the inequivalent
self-dual $[2k,k]$ codes, obtained from $U_{ri}$, $i=1,2,\dots,s,$
via the described algorithm. Following the algorithm and the
theorems, we have

\begin{cor}
The union $U_{k1}\cup U_{k2}\cup\dots\cup U_{ks}$ consists of all
inequivalent binary self-dual $[2k,k]$ codes, and $U_{ki}\cap
U_{kj}=\emptyset$ for $i\neq j$.
\end{cor}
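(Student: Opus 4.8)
The plan is to prove the two assertions --- completeness of the union and pairwise disjointness of the pieces --- simultaneously, by induction on the number of levels $k-r$ separating the input dimension $r$ from the output dimension $k$. For each intermediate dimension $m$ with $r\le m\le k$, let $U_{mi}$ denote the set of self-dual $[2m,m]$ codes produced from $U_{ri}$ by the recursion; the inductive hypothesis will be that $U_{m1}\cup\dots\cup U_{ms}$ is a complete and irredundant list of representatives of the equivalence classes of self-dual $[2m,m]$ codes and that $U_{mi}\cap U_{mj}=\emptyset$ for $i\ne j$. The base case $m=r$ is the hypothesis of the corollary together with the fact that $U_{r1},\dots,U_{rs}$ was chosen as a partition of $U_r$. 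The point that makes the induction run is that the canonical representative map $\rho$, the notion of canonical permutation, and the parent test all depend only on the equivalence class of a code and not on the particular execution of the algorithm; hence the parent/child relation traversed by Procedure \textsc{Augmentation} is the same whether we start from all of $U_r$ or from one of its parts, and the parent of a code generated inside the run started from $U_{ri}$ again lies in that run, i.e.\ in $U_{mi}$.

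For the disjointness step at level $m+1$, I would suppose $B_1\in U_{m+1,i}$ and $B_2\in U_{m+1,j}$ are equivalent with $i\ne j$ and derive a contradiction. Both were placed in the output, so both pass the parent test; by Lemma~\ref{Lemma:parent} their parents $A_1$ and $A_2$ are equivalent. But $A_1\in U_{mi}$ and $A_2\in U_{mj}$, and by the inductive hypothesis $U_{m1}\cup\dots\cup U_{ms}$ contains no two distinct equivalent codes, so $A_1=A_2$ and $i=j$, a contradiction. The same argument (now allowing $i=j$) reproves that each $U_{m+1,i}$ is internally irredundant; here one invokes the ``inequivalent'' half of the correctness theorem for the algorithm, whose proof uses only that the starting set consists of inequivalent codes, not that it is complete --- and $U_{ri}\subseteq U_r$ is indeed irredundant. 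Combining, $U_{m+1,1}\cup\dots\cup U_{m+1,s}$ is irredundant and its pieces are pairwise disjoint.

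For the completeness step at level $m+1$, I would take an arbitrary self-dual $[2m+2,m+1]$ code $C$ and its canonical representative $B=\rho(C)$, so that $B\cong C$ and $B$ passes the parent test. The parent of $B$ is a self-dual $[2m,m]$ code, which by the inductive hypothesis is equivalent to a unique $A\in U_{mi}$. Applying Lemma~\ref{Lemma:equ-parents} to this equivalence, there is a child $B_A$ of $A$ with $B_A\cong B$ that also passes the parent test; following Procedure \textsc{Augmentation} in the run started from $U_{ri}$, the (unique) inequivalent child of $A$ in the equivalence class of $B_A$ passes the parent test and is recursed upon, so a code equivalent to $B_A$, and hence to $C$, eventually appears in $U_{m+1,i}$. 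Thus every self-dual $[2m+2,m+1]$ code is equivalent to a member of $U_{m+1,1}\cup\dots\cup U_{m+1,s}$, which completes the induction and, at $m+1=k$, the proof.

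The step I expect to demand the most care is the bookkeeping that pins down ``the parent of a code'' as an equivalence-class invariant and confirms that the parent of an element of $U_{m+1,i}$ lands in $U_{mi}$: one must check that passing the parent test is independent of the representative chosen for $Child(A)$, and that replacing the full set $U_r$ by a part $U_{ri}$ changes neither the canonical forms nor the outcome of the test. Once this is granted, the rest is a routine combination of Lemma~\ref{Lemma:parent}, Lemma~\ref{Lemma:equ-parents}, and the correctness theorem already established.
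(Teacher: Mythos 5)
Your proof is correct and is essentially the argument the paper intends: the paper offers no proof of this corollary beyond ``Following the algorithm and the theorems, we have,'' and your induction simply makes explicit what that phrase hides --- namely that the irredundancy half of the correctness theorem uses only that the starting set consists of inequivalent codes (true of each $U_{ri}$ separately), that completeness is a statement about the whole union, and that cross-part disjointness follows from Lemma \ref{Lemma:parent} together with the disjointness and irredundancy of the $U_{mi}$ one level down. The one point you flag as delicate --- that passing the parent test does not depend on which orbit representative is chosen for $Child(A)$ --- is glossed over in exactly the same way in the paper's own proof of the correctness theorem, so your write-up is, if anything, more careful than the source.
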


This corollary shows that we can divide the computations into
parts that need no mutual communication.

The most difficult part in the algorithm is the calculation of the
automorphism group and the canonical permutation of a given code.
That's why we try to avoid this part by using invariants.
Actually, we can do the parent test without knowing the
automorphism group of the code, using only appropriate invariants,
which is much faster.

\begin{definition}
Let $N=\{1,2,\dots,n\}$ be the set of the coordinates of the code
$C$. An invariant of the coordinates of $C$ is a function $f:
N\to\Z$ such that if $i$ and $j$ are in the same orbit with
respect to $\Aut(C)$ then $f(i)=f(j)$.
\end{definition}


The code $C$ and the invariant $f$ define a partition $\pi= \{
N_1,N_2,\dots,N_l\}$ of the coordinate set $N$,  such that
$N_i\cap N_j=\emptyset$ for $i\not =j$, $N=N_1\cup
N_2\cup\dots\cup N_l$, and two coordinates $i,j$ are in the same
subset of $N \iff  f(i)= f(j)$. So the subsets $N_i$ are unions of
orbits, therefore we call them pseudo-orbits. We can use the fact
that if we take two coordinates from two different subsets, for
example $s\in N_i$ and $t\in N_j$, $N_i\cap N_j=\emptyset$, they
belong to different orbits under the action of $\Aut(C)$ on the
coordinate set $N$. Moreover, using an invariant $f$, we can
define a new canonical representative and a new canonical
permutation of $C$.

Firstly, we introduce an ordering of the pseudo-orbits of $C$. We
say that $N_i\prec N_j$ for $i\neq j$, if: (1) $|N_i|<|N_j|$, or
(2) $|N_i|=|N_j|$ and $f(s)<f(t)$ for $s\in N_i$, $t\in N_j$. Then
we redefine the canonical representative of one equivalence class
in the following way:
\begin{enumerate}
\item If the smallest pseudo-orbit consists of only one
coordinate, we take for a representative a code in the equivalence
class for which this coordinate is the last one, and the $n-1$-th
coordinate is from the second smallest pseudo-orbit. The last two
coordinates of the code $C$ have been added according the
construction method described in the previous section. If none of
these two coordinates belongs to the smallest pseudo-orbit, $C$
does not pass the parent test. If one of them belongs to this
orbit, we check whether the other one belongs to the second
smallest pseudo-orbit. If no, $C$ does not pass the parent test,
but if yes, we should find the canonical representative to be sure
whether $C$ passes the test.

\item If the smallest pseudo-orbit $N_s$ contains three or more
coordinates, we again take for a representative a code in the
equivalence class for which the coordinates from the smallest
pseudo-orbit are the last coordinates. According to Lemma
\ref{Lemma:prop} not all coordinates in $N_s$ are equal, so we can
take two different coordinates in the end.

\item The smallest pseudo-orbit $N_s$ consists of exactly two
coordinates. If these two coordinates are different, we can take
them for the last two coordinates. But if they are equal, we are
looking for the next pseudo-orbit. There is only one self-dual
code such that its coordinates can be partitioned in subsets of
two elements such that both coordinates in each subset are equal,
and this is the code $i_2^k$, $n=2k$. But the automorphism group
of this code is $\Z_2^k\cdot S_k$ and it acts transitively on the
coordinates.

\end{enumerate}

The complexity of the algorithm mainly depends on two of its
steps. The first one is "\textit{find the set $Child(A)$ of all
inequivalent child type codes of} $A$".  
For this step, we have as input a set $L$ of generating elements
of the automorphism group of the code $A$, and the set $D$ of all
binary odd-weight vectors with length $\dim A$. The algorithm
splits the set of these vectors into orbits under the action of
the group $\Aut(A)$. Any orbit defines a child type code of $A$,
as different orbits give inequivalent child type codes. This step
is computationally cheap. The complexity is linear with respect to
the product $|L|\cdot |D|$, $|D|=2^{\dim A-1}$ (for details see
\cite{perm-group-book}).

In the step "\textit{if $B$ passes the parent test}", using a
given generator matrix of the code $B$ we have to calculate
invariants, and in some cases also canonical form and the
automorphism group $\Aut(B)$. Finding a canonical form and the
automorphism group is necessary when the used invariants are not
enough to prove whether the code $B$ pass or not the parent test.
If the code $B$ passes the parents test, the algorithm needs a set
of generators of $\Aut(B)$ for the next step (finding the child
type codes). For this step, we have to generate the set $M_w$ of
all codewords of weight $\le w$. The complexity here is
$O(\sum_{i=1}^{w/2}{\dim B\choose i})$. To determine the
invariants, we use the set $M_d$, where $d$ is the minimum weight
of the code $B$. We use mostly the invariants $f_1$ and $f_2$
defined as follows:
$$f_1:\{1,2,\dots,n\}\rightarrow
\{0,1\}, \ \ f_2:\{1,2,\dots,n\}\rightarrow \Z,$$ where $n$ is the
length of $B$. Moreover, if $s=\sum_{v\in M_d} v_i$ then
$f_1(i)=1$ if and only if $s$ is odd, and $f_2(i)=s$, $1\le i\le
n$, $v=(v_1,v_2,\dots,v_n)\in\F_2^n$. To calculate the values of
$f_1$, the algorithm needs only $|M_d|$ operations in the case of
bitwise presentation of the codewords. For $f_2$ the algorithm
uses $O(n|M_d|)$ operations. We use also a vector valued invariant
$f$ such that $f(i)=(f_1(i),f_2(i))$. If the code is not rejected
with the invariants, the algorithm generates the smallest set
$M_w$ of rank $\dim B$. The corresponding to this set binary
matrix is the input in the algorithm for a canonical form. For the
complexity of this type of algorithms see \cite{Iliya-aut, BM}.

The described algorithm is implemented in the program
\textsc{Gen-self-dual-bin} of the package \textsc{Self-dual-bin}
written in \textsc{Borland Delphi 6.0}.

\section{The Results}
\label{Sect:Results}

To find all inequivalent self-dual codes of length 38, we begin
from the set $U_{16}$ of all inequivalent self-dual codes of
length 32 and dimension 16. We partition $U_{16}$ into three
subsets and run them on three cores using a PC Intel i5 4 core
processor. The number of all codes considered in the program
(these are the inequivalent child type codes for each code in
$U_{16}\cup U_{17}\cup U_{18}$) is 2,338,260,952. For 151,016,675
of them, a canonical form is computed. The number of the obtained
inequivalent codes in the set $U_{19}$ is 38,682,183. The
calculations took about four days.

Generator matrices of all inequivalent self-dual codes of length
38 are saved in  three files in the form used in \cite{Bo_SJC07}.
A compressed version of these files is available on the web-page
\verb"http://www.moi.math.bas.bg/~iliya/". A file with additional
information which contains the number of codewords of weights 2,
4, 6, 8, and the order of the automorphism group for each code is
also available. This information allows to compute the expressions
in the mass formulas.


The possible weight enumerators of the self-dual codes of length
38 are given by the following formula
\begin{align*}
W(y)=&1+\alpha y^2+(13\alpha+\beta)y^4+(57+76\alpha+7\beta-
\gamma-4\delta)y^6+(228+260\alpha+17\beta-
\gamma+28\delta)y^8\\
 &+(560\alpha+ 7\beta +6\gamma-136\delta+1520)y^{10}+(728\alpha-43\beta+6\gamma +248\delta +10032)y^{12}\\
 &+(364\alpha- 77\beta -15\gamma+100\delta+37620)y^{14}+(85614-572\alpha-11\beta-15\gamma -764\delta)y^{16}\\
 &+(127072 - 1430\alpha+99\beta +20\gamma+528\delta)y^{18}+ \cdots +y^{38},
\end{align*}
where $\alpha,\beta,\gamma,\delta$ are integers. The numbers of
inequivalent codes, the numbers of different weight enumerators
and the numbers of different orders of the automorphism groups for
each minimum weight $d$ are given in Table \ref{Table:codes}.

\begin{table}[htb]\rm
\begin{center}
\caption{Numbers of inequivalent codes of length
38}\vspace*{0.2in} \label{Table:codes}
\begin{tabular}{c|cccc}
\noalign{\hrule height1pt} $d$&2&4&6&8\\
\hline  $\sharp$ codes&519492&27463982&10695965&2744\\ \hline
$\sharp$ weight enumerators&3504&7176&88&2\\ \hline $\sharp$
orders of
$\Aut(C)$&799&764&75&18\\
\noalign{\hrule height1pt}
\end{tabular}
\end{center}
\end{table}

The smallest order $\sharp \Aut_s$ and the largest order
$\sharp\Aut_l$ among the automorphism groups are listed in Table
\ref{s-l} for each minimum weight $d$.

\begin{table}[htb]\rm
\begin{center}
\caption{Orders of the automorphism groups}\vspace*{0.2in}
\label{s-l}
\begin{tabular}{c|cccc}
\noalign{\hrule height1pt} $d$&2&4&6&8\\
\hline  $\sharp\Aut_s$ &2&4&1&1\\ \hline
$\sharp\Aut_l$&$2^{19}\cdot 19!$&$2^{13}\cdot 3\cdot 7\cdot 16!$&1032192&504\\
\noalign{\hrule height1pt}
\end{tabular}
\end{center}
\end{table}

We give one more table with some additional results about the
automorphisms of the codes. We list in Table \ref{Table:aut} the
number of self-dual $[38,19,d]$ codes $C$ such that $p^k$ divides
the order of $\Aut(C)$ where $p$ is a prime and $k$ is a positive
integer.

For the verification of our results, we use the mass formula
(\ref{mass}) and also the following corollary from one lemma of
Thompson \cite{Thompson}:

\begin{theorem}\label{lemma:Thompson}{\rm \cite{Harada36}}
Let $n$ and $d$ be even positive integers and let $U$ be a family
of inequivalent self-dual codes of length $n$ and minimum weight
at most $d$. Then $U$ is a complete set of representatives for
equivalence classes of self-dual codes of length $n$ and minimum
weight at most $d$ if and only if
\begin{equation}\label{mass-d}
\sum_{C\in U}\frac{n!}{|\Aut(C)|}|\{x\in C\vert
\wt(x)=d\}|={n\choose d}\prod_{i=1}^{n/2-2}(2^i+1).
\end{equation}
\end{theorem}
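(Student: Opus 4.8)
The approach I would take is a double count of the set
\[
\mathcal{P}=\{(C,x)\ :\ C\text{ a binary self-dual code of length }n,\ x\in C,\ \wt(x)=d\}.
\]
The preliminary observation is that, $d$ being even, every $x\in\F_2^n$ of weight $d$ has $\langle x,x\rangle\equiv\wt(x)\equiv0\pmod2$, so $x$ is a nonzero self-orthogonal vector and is therefore eligible to lie in a self-dual code.

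Counting $\mathcal{P}$ by the code: a fixed self-dual $[n,n/2]$ code $C$ contributes $|\{x\in C:\wt(x)=d\}|$ pairs; equivalent codes have the same weight distribution; the equivalence class of $C$ has $n!/|\Aut(C)|$ members; and a code of minimum weight exceeding $d$ contributes nothing. Hence, for a complete set of representatives of the equivalence classes of self-dual $[n,n/2]$ codes of minimum weight at most $d$, one gets $|\mathcal{P}|=\sum_C\frac{n!}{|\Aut(C)|}\,|\{x\in C:\wt(x)=d\}|$. Counting $\mathcal{P}$ by the vector: there are ${n\choose d}$ weight-$d$ vectors and, since a coordinate permutation carrying one such vector to another also carries self-dual codes to self-dual codes, each of them lies in the same number $M_d$ of self-dual codes, so $|\mathcal{P}|={n\choose d}M_d$. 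Equating the two counts reduces (\ref{mass-d}) to showing $M_d=\prod_{i=1}^{n/2-2}(2^i+1)$: the \emph{only if} direction is then immediate, and for \emph{if} one embeds the given family $U$ into a complete set $U'$, subtracts the identity for $U$ from that for $U'$, and concludes that every (nonnegative) term of $\sum_{C\in U'\setminus U}\frac{n!}{|\Aut(C)|}\,|\{x\in C:\wt(x)=d\}|=0$ must vanish.

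The step I expect to be the main obstacle is the evaluation of $M_d$. Fixing a weight-$d$ vector $x$, a self-dual code contains $x$ precisely when it lies between $\langle x\rangle$ and $x^{\perp}$, so such codes correspond bijectively to the self-dual (maximal totally isotropic) subspaces of the $(n-2)$-dimensional quotient $x^{\perp}/\langle x\rangle$ equipped with its induced bilinear form. The number of these depends on the type of that form, and this is where evenness and the size of $d$ matter: when $d<n$ there is a coordinate outside the support of $x$, so $x^{\perp}$ contains a norm-$1$ vector and the induced form is of the non-alternating (standard) type, for which the number of self-dual subspaces is $N(n-2)=\prod_{i=1}^{n/2-2}(2^i+1)$ by the mass formula of the Introduction; the boundary case $d=n$, where $x$ is the all-ones vector and the quotient form is alternating, is genuinely exceptional and lies outside the range in which the corollary is used. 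A second, milder point concerns the closing of the converse: a self-dual code of minimum weight strictly less than $d$ may possess no weight-$d$ codeword, so the vanishing just derived forces $U'\setminus U$ to consist only of codes of minimum weight exactly $d$ — each contributing a strictly positive term — exactly when $U$ already represents every self-dual code of length $n$ of smaller minimum weight, which is the setting (alongside the mass formula (\ref{mass})) in which the statement is applied; under it $U'=U$, and $U$ is complete.
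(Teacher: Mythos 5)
The paper offers no proof of this statement at all --- it is imported from \cite{Harada36} as a corollary of Thompson's lemma --- so there is no internal argument to compare you against. Judged on its own, your double count is the standard proof and is essentially correct: counting pairs $(C,x)$ over \emph{all} self-dual codes, using $n!/|\Aut(C)|$ for the size of an equivalence class, using the transitivity of $S_n$ on weight-$d$ vectors, and identifying the self-dual codes through a fixed even-weight $x$ with the maximal totally isotropic subspaces of $x^\perp/\langle x\rangle$, which number $\prod_{i=1}^{n/2-2}(2^i+1)$ when the induced form is non-alternating. Both of the caveats you flag at the end are genuine defects of the statement as printed, not artifacts of your method, and deserve to be promoted from ``expected obstacles'' to explicit hypotheses. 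For $d=n$ the identity simply fails: with $n=d=4$ the left side of (\ref{mass-d}) is $4!/8=3$ while the right side is $1$. More importantly, the ``if'' direction is false as literally stated: for $n=24$, $d=4$, the code $i_2\oplus C_{22}$ with $C_{22}$ a self-dual $[22,11,6]$ code has minimum weight $2$ but no codewords of weight $4$, so deleting its class from a complete set of representatives leaves (\ref{mass-d}) intact while destroying completeness. Your repair --- assuming $2\le d<n$ and that $U$ already contains a representative of every class of minimum weight strictly less than $d$ --- is the right one, and it is exactly the regime in which the paper applies the criterion in Section 4 (successively for $d=2,4,6,8$, together with the full mass formula (\ref{mass})). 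In short: no gap in your reasoning, but the statement you are proving needs those two hypotheses for the equivalence to hold.
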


For the constructed codes we obtain the same values of the left
and the right expressions in the formula (\ref{mass-d}) for
$n=38$, namely:
\begin{itemize}
\item[(d=2)] 19137697424578816915816164139573797711865999715625;
\item[(d=4)] 2009458229580775776160697234655248759745929970140625;
\item[(d=6)]
75153737786321014028410076576106303614497780883259375;
\item[(d=8)]
1331294783643400819931835642205311664028246404217737500;
\item[(all)] 27222898185745116523209337325140537285726884375 (formula (\ref{mass})).
\end{itemize}

\begin{remark}\rm
To calculate the sums in the mass-formulas (\ref{mass}) and
(\ref{mass-d}), we use the package \textsc{LONGNUM} of S. Kapralov
for calculations with large integers \cite{Kapralov}.
\end{remark}

\section{Conclusion}

The generation of all inequivalent binary self-dual codes of
length $n\ge 38$, using only standard computer algebra systems,
seems to be infeasible. That is why we use special algorithmic
techniques to surmount difficulties and to classify codes even
with PC's. In this work we describe the classification of the
self-dual codes of length 38.

Denote by $SD_k$ the number of all inequivalent self-dual codes of
dimension $k$. Obviously,  $$SD_k\ge
\displaystyle\frac{\prod_{i=1}^{k-1} (2^i + 1)}{(2k)!}.$$ Consider
the sequence $a_k=SD_k(2k)!/\prod_{i=1}^{k-1} (2^i + 1)$,
$k=1,2,\dots$.  Looking at the already known classifications of
binary self-dual codes, we can calculate the values of $a_k$ for
$k\le 19$. We list the integer part of these values in Table
\ref{Table:ak}. Moreover, if we count the number of the
inequivalent self-dual codes with a trivial automorphism group, we
see that there are no such codes for length $n\le 32$, but for the
larger lengths this number increases very fast. For example, more
than a quarter of all codes of length 38, namely 10140257
inequivalent codes, have a trivial automorphism group. So we have
the following conjecture.


\begin{conj}
The sequence $\{a_k, \ k=10,11,12,\dots\}$ is decreasing.
\end{conj}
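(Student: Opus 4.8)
\medskip
\noindent\emph{A proof strategy.} The first step is to recast $a_k$ arithmetically. By the mass formula (\ref{mass}) applied with $n=2k$ we have $N(2k)=\sum_{j=1}^{SD_k}(2k)!/|\Aut(C_j)|$, so $a_k=SD_k(2k)!/N(2k)$ is exactly the harmonic mean of the automorphism-group orders $|\Aut(C_j)|$ over all inequivalent self-dual $[2k,k]$ codes; in particular $a_k\ge 1$, with equality iff every such code has trivial automorphism group. Applying the Cauchy--Frobenius lemma to the action of $S_{2k}$ on the $N(2k)$ labelled self-dual codes of length $2k$ gives $SD_k=\frac{1}{(2k)!}\sum_{\sigma\in S_{2k}}|\mathrm{Fix}(\sigma)|$, where $\mathrm{Fix}(\sigma)$ is the set of self-dual codes of length $2k$ having $\sigma$ as an automorphism; isolating the term $\sigma=\mathrm{id}$ yields
\[
a_k=1+\frac{1}{N(2k)}\sum_{\sigma\in S_{2k},\ \sigma\neq \mathrm{id}}|\mathrm{Fix}(\sigma)| .
\]
Setting $\varepsilon_k:=a_k-1$, the conjecture is equivalent to the assertion that $\varepsilon_k$ is strictly decreasing for $k\ge 10$.

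The second step is to expand $\varepsilon_k$ over conjugacy classes: writing $h_\lambda$ for the number of permutations of $S_{2k}$ of cycle type $\lambda$ and $F_\lambda(k)=|\mathrm{Fix}(\sigma_\lambda)|$ for a representative, $\varepsilon_k=N(2k)^{-1}\sum_{\lambda\neq 1^{2k}}h_\lambda F_\lambda(k)$. Each $F_\lambda(k)$ is an enumeration of self-dual codes with a prescribed automorphism, accessible through the structure theory of self-dual codes with automorphisms (see, e.g., \cite{HP}). One expects the sum to be governed by permutations of small support --- first of all by a single transposition $\lambda=2\,1^{2k-2}$, for which $h_\lambda=\binom{2k}{2}$ and $F_\lambda(k)$ is comparable to $N(2k-2)$ up to a polynomial factor (already the decomposable codes $i_2\oplus C'$ contribute $N(2k-2)$), so this term is of order $k^2\,N(2k-2)/N(2k)=k^2/(2^{k-1}+1)$, which is eventually strictly decreasing in $k$. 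To turn this into a proof one must establish a uniform bound $h_\lambda F_\lambda(k)\le N(2k)\,k^{O(1)}2^{-ck}$ valid for every $\lambda\neq 1^{2k}$, with $c$ larger than the exponent produced by the transposition term; this must in particular defeat the enormous class sizes $h_\lambda$ of the large-support cycle types (fixed-point-free involutions and the like) by showing that their fixed-code counts $F_\lambda(k)$ are correspondingly minuscule, via the structure theory together with crude counting over the group rings $\F_2[\Z/p]$. This uniform tail estimate is the technical heart of the argument.

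The step I expect to be the real obstacle, and the reason the statement is phrased as a conjecture, is the range of $k$ near which $\{a_k\}$ passes from increasing to decreasing --- evidently right at $k=10$. There consecutive values $\varepsilon_k,\varepsilon_{k+1}$ are extremely close, so the crude asymptotics above fall far short of the precision needed, and one essentially needs the exact values of $SD_k$; these are known only for $k\le 19$, the classification at length $40$ being incomplete on the singly-even side. A complete proof would therefore require either (i) bounds on $\sum_\lambda h_\lambda F_\lambda(k)/N(2k)$ sharp enough to force $\varepsilon_k>\varepsilon_{k+1}$ for all $k\ge 10$, including the delicate window just past the peak, or (ii) an extension of the exhaustive classification to lengths $40,42,\dots$ so as to meet, from below, whatever threshold $k\ge K_0$ the asymptotic argument can secure from above. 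I would expect the monotone decrease of $\varepsilon_k$ for $k$ large (say $k\ge 25$) to be within reach of the Burnside scheme; lowering $K_0$ all the way to $10$, or equivalently acquiring the missing classification data, is the genuine difficulty.
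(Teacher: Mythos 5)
The statement you are addressing is a \emph{conjecture} in the paper: the authors give no proof, only the numerical evidence of Table \ref{Table:ak}, where the exact values of $a_k$ (computed from the completed classifications for $k\le 19$) rise to a maximum at $k=10$ and then decrease monotonically through $k=19$. So there is no argument in the paper against which to measure yours; any comparison is between your strategy and the open problem itself.

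Your preliminary reductions are correct and are genuinely more than the paper offers: since $N(2k)=\prod_{i=1}^{k-1}(2^i+1)$ and the mass formula gives $N(2k)=(2k)!\sum_j |\Aut(C_j)|^{-1}$, the quantity $a_k$ is indeed the harmonic mean of the automorphism group orders, and the Cauchy--Frobenius identity correctly yields $a_k=1+N(2k)^{-1}\sum_{\sigma\neq\mathrm{id}}|\mathrm{Fix}(\sigma)|$. This is a clean and useful reformulation (it also explains why $a_k\ge 1$ and why the growing proportion of codes with trivial automorphism group, noted in the paper, pushes $a_k$ toward $1$). However, the proposal is not a proof, and you say so yourself: the two essential steps are missing. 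First, the uniform tail bound $h_\lambda F_\lambda(k)\le N(2k)\,k^{O(1)}2^{-ck}$ over all nontrivial cycle types is asserted as a target, not established; the fixed-code counts $F_\lambda(k)$ for permutations of large support are exactly the hard enumerative objects (self-dual codes with prescribed automorphisms), and no bound on them is derived. Second, even granting such a bound for large $k$, monotonicity \emph{at and just after the peak} $k=10$ cannot follow from asymptotics --- it is a statement about specific finite values, verifiable only through the exact classifications, which currently stop at $k=19$ (and at $k=20$ only the doubly-even side is known). Your own diagnosis of these two obstacles is accurate; the conclusion is that the statement remains a conjecture, and your text should be presented as a reduction and a programme rather than as a proof.
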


\begin{table}[htb]\rm
\begin{center}
\caption{Values of $\lfloor a_k\rfloor$ for $k\le
19$}\vspace*{0.2in} \label{Table:ak}
\begin{tabular}{c|cccccccccc}
\noalign{\hrule height1pt} $k$&1&2&3&4&5&6&7&8&9&10\\
\hline  $\lfloor a_k\rfloor$&2&8&48&597&3162&18974&70836&
230631&353061&464937\\
\noalign{\hrule height1pt}
$k$&11&12&13&14&15&16&17&18&19&20\\
\hline  $\lfloor
a_k\rfloor$&327440&194067&57659&13482&2004&273&34&7
&2&?\\
\noalign{\hrule height1pt}
\end{tabular}
\end{center}
\end{table}

The doubly-even self-dual codes of length 40 have been classified
in \cite{BHM40}. 
A lower bound on the number of all self-dual codes of that length
is given by $\displaystyle\frac{\prod_{i=1}^{19} (2^i +
1)}{40!}>4,585,657,509$. According to Conjecture 1 and Table
\ref{Table:ak}, we have

\begin{conj} The number of the inequivalent binary self-dual
codes of length $40$ are at most $2\cdot
\displaystyle\frac{\prod_{i=1}^{19} (2^i + 1)}{40!}<
9,171,315,020$.
\end{conj}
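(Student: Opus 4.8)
The plan is to derive the bound from the definition of the sequence $a_k$ together with Conjecture 1 and the data of Table \ref{Table:ak}. Recall that $a_k=\ds\frac{SD_k\cdot(2k)!}{\prod_{i=1}^{k-1}(2^i+1)}$, so $SD_{20}=a_{20}\cdot\ds\frac{\prod_{i=1}^{19}(2^i+1)}{40!}$; since the inequality $2\cdot\ds\frac{\prod_{i=1}^{19}(2^i+1)}{40!}<9{,}171{,}315{,}020$ is a direct arithmetic check, it suffices to prove $a_{20}\le 2$.

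First I would invoke Conjecture 1 to get $a_{20}<a_{19}$. Then I would pin down $a_{19}$ from the classification of the self-dual codes of length $38$: by the mass formula (\ref{mass}) one has $\sum_{C}\frac{38!}{|\Aut(C)|}=\prod_{i=1}^{18}(2^i+1)$, so $a_{19}=SD_{19}\cdot\bigl(\sum_{C}1/|\Aut(C)|\bigr)^{-1}$ is precisely the harmonic mean of the automorphism-group orders over the self-dual $[38,19]$ codes. Because more than a quarter of these codes have a trivial automorphism group, this harmonic mean is only slightly above $2$; concretely $\lfloor a_{19}\rfloor=2$, i.e. $a_{19}\in[2,3)$.

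The main obstacle is that $a_{20}<a_{19}$ together with $a_{19}<3$ give only $a_{20}<3$, hence merely $SD_{40}<3\cdot\ds\frac{\prod_{i=1}^{19}(2^i+1)}{40!}$; monotonicity of $\{a_k\}$ alone does not force $a_{20}\le 2$. What one really needs is a modest quantitative refinement of Conjecture 1 --- that the decay continues with, say, $a_{19}/a_{20}\ge\tfrac32$; this is very conservative, since the ratios $a_k/a_{k+1}$ for $15\le k\le 18$ read off Table \ref{Table:ak} all exceed $2$. Such a bound gives $a_{20}<\tfrac23\cdot 3=2$ and therefore $SD_{40}<2\cdot\ds\frac{\prod_{i=1}^{19}(2^i+1)}{40!}<9{,}171{,}315{,}020$. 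Failing such a refinement, the honest route to a theorem is the direct (partial or complete) enumeration of the self-dual $[40,20]$ codes already announced in the paper; I expect the assertion to be settled that way rather than by an unconditional deduction from Conjecture 1.
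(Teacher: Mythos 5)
Your assessment is essentially correct, and the most important thing you did was decline to claim a proof. The statement is labelled a conjecture in the paper precisely because no proof is given there: the authors justify it only with the phrase ``According to Conjecture 1 and Table~\ref{Table:ak}'', which is exactly the heuristic you reconstruct. Your reduction --- $SD_{20}=a_{20}\cdot\prod_{i=1}^{19}(2^i+1)/40!$, so the bound is equivalent to $a_{20}\le 2$, with the numerical inequality being an arithmetic check against the lower bound $\prod_{i=1}^{19}(2^i+1)/40!>4{,}585{,}657{,}509$ already displayed in the paper --- matches the intended reading, and your identification of $a_{19}$ as the harmonic mean of the orders $|\Aut(C)|$ over the inequivalent $[38,19]$ codes via the mass formula is correct. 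Your diagnosis of the gap is also exactly right and is the substantive point: Conjecture 1 yields only $a_{20}<a_{19}$, and $\lfloor a_{19}\rfloor=2$ yields only $a_{19}<3$, hence $a_{20}<3$, which is strictly weaker than $a_{20}\le 2$. The paper supplies no quantitative decay estimate of the kind you note would be needed (e.g.\ $a_{19}/a_{20}\ge 3/2$); it simply extrapolates from the ratios $a_k/a_{k+1}$ in Table~\ref{Table:ak}, all of which exceed $3$ for $14\le k\le 18$. So there is nothing to repair in your proposal: you correctly reconstructed the authors' heuristic and correctly explained why it falls short of a proof, which is consistent with the statement being posed as a conjecture rather than a theorem.
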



\begin{table}[htb]\rm
\begin{center}
\caption{Number of codes $C$ for which $p^k$ divides the order of
their automorphism groups}\vspace*{0.2in} \label{Table:aut}
\begin{tabular}{|l|cccc|}
\noalign{\hrule height1pt} $p^k\setminus d$&2&4&6&8\\
\hline  2 &519492&27463982&557127&453\\
$2^2$&478434&27463982&89141&111\\
$2^3$&467048&17031875&23498&34\\
$2^4$&397699&16153273&8201&1\\
$2^5$&375614&9310617&3788&-\\
$2^6$&303984&8307428&1733&-\\
$2^7$&274730&4686394&846&-\\
$2^8$&212982&3854920&414&-\\
$2^9$&183525&2208800&220&-\\
$2^{10}$&140158&1629411&134&-\\
$2^{11}$&112968&970908&79&-\\
$2^{12}$&85784&641657&48&-\\
$2^{13}$&65569&387900&26&-\\
$2^{14}$&48579&237143&14&-\\
$2^{15}$&35399&140837&8&-\\
$2^{16}$&25470&83500&5&-\\
$2^{17}$&17908&49360&-&-\\
$2^{18}$&12729&29175&-&-\\
$2^{19}$&8838&17009&-&-\\
$2^{20}$&6161&9646&-&-\\
$2^{21}$&4162&5612&-&-\\
$2^{22}$& 2774&3228&-&-\\
$2^{23}$&1812&1754&-&-\\
$2^{24}$&1177&996&-&-\\
$2^{25}$&790&505&-&-\\
$2^{26}$&551&235&-&-\\
$2^{27}$&345&91&-&-\\
$2^{28}$&226&33&-&-\\
$2^{29}$&156&6&-&-\\
 \noalign{\hrule height1pt}
\end{tabular} \ \
\begin{tabular}{|l|cccc|}
\noalign{\hrule height1pt} $p^k\setminus d$&2&4&6&8\\
\hline  $2^{30}$&87&2&-&-\\
$2^{31}$&47&1&-&-\\
$2^{32}$&16&-&-&-\\
$2^{33}$&7&-&-&-\\
$2^{34}$&3&-&-&-\\
$2^{35}$&1&-&-&-\\
&&&&-\\
3 &132329&2743510&3916&85\\
$3^2$&45728&424433&185&7\\
$3^3$&17286&93437&26&1\\
$3^4$&6463&22451&3&-\\
$3^5$&1763&4315&-&-\\
$3^6$&256&366&-&-\\
$3^7$&51&52&-&-\\
$3^8$&8&5&-&-\\
5 &4209&16112&9&-\\
$5^2$&449&624&-&-\\
$5^3$&52&40&-&-\\
7 &4270&21981&14&5\\
$7^2$ &467&771&-&-\\
$7^3$ &83&83&-&-\\
$7^4$ &16&13&-&-\\
$7^5$ &1&2&-&-\\
11&47&29&-&-\\
13&17&7&-&-\\
17&4&-&-&-\\
19&1&-&2&1\\
23&6&4&-&-\\
31&2&2&-&-\\
 \noalign{\hrule height1pt}
\end{tabular}
\end{center}
\end{table}

\section*{Acknowledgements}


The main part of this research was done during the authors' visit
to Department of Algebra and Geometry at Magdeburg University,
Germany. The authors would like to thank Prof. Wolfgang Willems
for his hospitality and support. Stefka Bouyuklieva also thanks
the Alexander von Humboldt Foundation for the financial support.



\end{document}